\newtheorem{theorem}{Theorem}[section]
\newtheorem{lemma}[theorem]{Lemma}
\theoremstyle{definition}
\newtheorem{example}[theorem]{Example}
\newcommand{\E}{\mathbf{E}}
\renewcommand{\P}{\mathbf{P}}
\newcommand{\Prob}[1]{\P\left\{#1\right\}}
\newcommand{\N}{\mathbb{N}}
\newcommand{\NN}{\mathbb{N}}
\newcommand{\R}{\mathbb{R}}
\newcommand{\ZZ}{\mathbb{Z}}
\newcommand{\bN}{\mathbf{N}}
\newcommand{\bW}{\mathbb{W}}
\newcommand{\bM}{\mathbb{M}}
\newcommand{\bY}{\mathbb{Y}}
\newcommand{\sA}{\mathscr{A}}
\newcommand{\sD}{\mathscr{D}}
\newcommand{\sM}{\mathcal{M}}
\newcommand{\sN}{\mathcal{N}}
\newcommand{\sS}{\mathcal{S}}
\newcommand{\sY}{\mathcal{Y}}
\newcommand{\sW}{\mathcal{W}}
\newcommand{\me}{\mathsf{m}}
\newcommand{\one}{\mathbf{1}}
\renewcommand{\epsilon}{\varepsilon}
\newcommand{\eps}{\varepsilon}
\newcommand{\nT}{\lfloor nT \rfloor }
\newcommand{\dm}{\vartheta}
\newcommand{\dH}[1][p]{\dm_{\mathrm{H},#1}}
\newcommand{\rhoH}{\rho_{\mathrm{H},p}}
\newcommand{\rhoHI}{\rho_{\mathrm{H},\infty}}
\renewcommand{\d}{\,\mathrm d}
\newcommand{\vto}{\xrightarrow{v}}
\newcommand{\vstop}[1][p]{\xrightarrow{\sS_{#1}}}
\numberwithin{equation}{section}
\title{Ranges of Extremal Processes and Heavy-Tailed Random Walks in Spaces of
    Growing Dimension}
\author{Bochen Jin and Ilya Molchanov}
\date{\today}
\begin{document}

\maketitle

\begin{abstract}
  We consider extremal processes and random walks generated by
  heavy-tailed random vectors taking values in $\R^d$ endowed with the
  $\ell_p$ metric. We establish limit theorems for the associated
  paths in the triangular array setting when both the number of steps
  $n$ and the dimension $d$ grow to infinity. It is shown that it is
  possible to transform the paths by suitable isometries of $\ell_p$
  such that the transformed paths converge in distribution and to
  identify the limit in terms of a Poisson cluster process. These
  results also imply the convergence in distribution of the paths
  viewed as finite metric spaces in the space of metric spaces
  equipped with the Gromov--Hausdorff metric.  Furthermore, we prove
  convergence in distribution of the transformed paths in the space of
  counting measures on the line equipped with a Hausdorff metric
  induced by a suitable $\ell_p$-type distance between counting
  measures.

  Keywords: consecutive maxima, counting measure, extremal process,
  growing dimension, heavy tails, random metric space, random walk,
  regular variation, Sibuya distribution

  MSC 2020: 60B10; 60F05; 60G55; 60G70
\end{abstract}

\section{Introduction}
\label{sec:introduction}

Let $X_n^{(d)}$, $n\geq1$, be a sequence of i.i.d.\ random vectors in
$\R^d_+=[0,\infty)^d$. Under regular variation of their tails,
the sequence of normalised partial coordinatewise maxima processes,
\begin{equation}
  \label{eq:M-seq}
  M_n^{(d)}=X_1^{(d)}\vee\cdots\vee X_n^{(d)}, \quad n\geq1,
\end{equation}
converges as $n\to\infty$ to an extremal process, see
\cite{res87}. The main aim of this paper is to study convergence of
this partial maxima process (and its variant for the sums with the
summands taking values in the whole $\R^d$) when both the number of
observations $n$ and the dimension $d$ increase to infinity.

In the case of sums, this problem has been considered by Kabluchko and
Marynych in \cite{MR4828862} and \cite{MR4774174}. Their results deal
with the random walk
\begin{displaymath}
  S_k^{(d)}=X_1^{(d)}+\cdots+X_k^{(d)},\quad k\geq 0,
\end{displaymath}
in $\R^d$. The values of this random walk are viewed as a finite
metric space
\begin{equation}
  \label{eq:W-seq}
  \bW_n^{(d)}=\big\{0,S_1^{(d)},\dots,S_n^{(d)}\big\},
\end{equation}
which is embedded in $\R^d$ with the $\ell_2$-metric. Under square
integrability and several further assumptions listed in
\cite{MR4828862}, the metric space $\bW_n^{(d)}$ normalised by
$\sqrt{n}$ converges in probability under the Gromov--Hausdorff metric
to the Wiener spiral. This space is the set of indicators
$\mathbf{1}_{[0,t]}$, $t\in[0,1]$, embedded in $L^2\big([0,1]\big)$;
it is isometric to the interval $[0,1]$ with the metric
$r(t,s)=|t-s|^{1/2}$. The Gromov--Hausdorff metric is defined as the
infimum of the Hausdorff distances between images of the spaces under
isometric maps into an arbitrarily chosen third metric space, see
\cite{MR1835418}.

In the heavy-tailed case and under a suitable normalisation, the
authors of \cite{MR4774174} proved that $\bW_n^{(d)}$ converges to a
random metric space derived from an infinite-dimensional version of a
subordinator (called a crinkled subordinator). All mentioned results
impose some orthogonality type conditions on the increments of the
random walk and require the uniform smallness of each component of the
increments.  In the heavy-tailed case, it is also assumed that the
Euclidean norm of the summand is regularly varying (in the triangular
arrays scheme, since the distribution of a summand changes with $d$).

As indicated in the beginning, we study the extremal version (the
\emph{maximum scheme}) of the setting of \cite{MR4774174}, where the
random walk (the partial sum process)
is replaced by a sequence of successive maxima (the partial maxima
process), and the
underlying metric space is
\begin{equation}
  \label{eq:sMn}
  \bM_n^{(d)}=\big\{0,M_1^{(d)},\dots,M_n^{(d)}\big\}
\end{equation}
considered as a subset of $\R^d_+$ with the $\ell_p$-distance for any
$p\in[1,\infty]$. Moreover, we also consider the \emph{random walk
  scheme} with heavy-tailed summands, replacing the $\ell_2$-setting
of \cite{MR4774174} with a general $\ell_p$-distance on $\R^d$. 

It should be noted that Kabluchko and Marynych
\cite{MR4828862,MR4774174} proved the convergence of metric spaces in
the Gromov--Hausdorff metric and then showed that this implies the
convergence in the Hausdorff metric up to isometries in $\ell_2$. Such
an implication does not hold for compact subsets of $\ell_p$ with
$p\neq 2$. For instance, the two-point metric spaces
  $K=\{(0,0,\ldots),(a^{1/p},(1-a)^{1/p},0,\ldots)\}$ and
  $L=\{(0,0,\ldots),(b^{1/p},(1-b)^{1/p},0,\ldots)\}$ with $\ell_p$
  metric are isometric for any $a,b\in(0,1)$ and $p\in[1,\infty)$ and
  so the Gromov--Hausdorff distance between them vanishes, while no
  linear isometry maps the nonzero point of $K$ to the nonzero point
  of $L$ if $a\neq b$.

The studies of random metric spaces rely on identifying the spaces up
to isometries. Contrary to the $\ell_2$-setting, where the space of
sequences admits a large group of isometries, the structure of
isometries of $\ell_p$, $p\neq2$, is highly rigid. In contrast to the
Hilbertian case $p=2$, these isometries are essentially generated by
permutations of components and the maps which redistribute finitely
many components of vectors as specified in
Theorem~\ref{th:isometries}, see \cite{MR105017} and
\cite[Theorem~7.4.1]{MR4249569}.  Because of this, it is impossible to
handle the $\ell_p$-case by putting conditions on the norm of a
summand and it is necessary to handle vectors of increasing dimension
in a way invariant under permutations.  For this, we associate each
vector with a finite counting measure on $\R$ having atoms at the
values of each component of the vector. Dealing with vectors of
varying (increasing) dimension requires working with triangular arrays
of counting measures. On the technical side, this calls for tools
suitable for handling the regular variation property in the triangular
array scheme.  In view of this, we have advanced in the studies of
extremal distributions in spaces of growing dimension. A recent work
in this direction due to Lederer and Oesting \cite{led:oes23} mostly
restricts consideration to the multivariate H\"usler--Reiss
distribution.

Furthermore, leaving the $\ell_2$ framework renders it impossible to
use standard expressions for the norm of the sum of vectors. This
problem is circumvented by truncating small components,
approximating random vectors by sparse vectors containing only their
large components, and imposing a condition which essentially
means that the coordinate positions of large components overlap with
negligible probability.

We obtain distributional convergence results of three different
types. First, we map the sets $\bM_n^{(d)}$ and $\bW_n^{(d)}$ into the
space $\bN$ of counting measures on $\R$ and show that their
normalised images converge in the Hausdorff metric induced by an
$\ell_p$-type distance between counting measures.
Second, we identify the limits of $\bM_n^{(d)}$ and
$\bW_n^{(d)}$ considered as random compact subsets of $\ell_p$ in the
Hausdorff metric on $\ell_p$ up to isometries. This means that the
isometric images of these sets converge in the Hausdorff metric
induced by the standard $\ell_p$-metric on the space of
sequences. This latter result immediately implies the third one,
saying that the metric spaces $\big(\bM_n^{(d)},\|\cdot\|_p\big)$ and
$\big(\bW_n^{(d)},\|\cdot\|_p\big)$ converge in distribution in the
Gromov--Hausdorff metric.

The limit is derived from a Poisson cluster process with the cluster
distribution determined by the tail measure of a single increment
$X_1^{(d)}$.  As it is usually the case for heavy-tailed summands in
spaces of fixed dimension, under the imposed conditions, both
$\bM_n^{(d)}$ and $\bW_n^{(d)}$ converge to the same limit.  This is
explained by the fact that the dominant summands contribute on
asymptotically disjoint sets of coordinates, so that their maximum is
asymptotically equivalent to the sum.

The paper is organised as follows.  Section~\ref{sec:main_result}
introduces the main concepts of metrics on the space of counting
measures and the regular variation in the triangular array scheme. The
key object is the tail measure $\nu$ arising as the vague limit of a
sequence of counting measures generated by the components of random
vectors of growing dimension. This section concludes with the main
result, Theorem~\ref{th:single-jump}. Its proof is presented in
Section~\ref{sec:proof-theorem-ref} followed by examples in
Section~\ref{sec:examples}.

We consider the basic examples of vectors
with i.i.d.\ components and vectors with only one nonzero
component, which were also included in \cite{MR4774174}. Another
example arises from vectors derived from finite-dimensional
distributions of moving maxima time series.
Section~\ref{sec:example:-long-range} discusses at length another
example which arises from time series with long-range dependence. In
this example we identify the cluster structure of a long-range
dependent time series which follows the logistic distribution. We show
that the cluster distribution is derived from a multivariate Sibuya
distribution. This example does not appear to be directly covered by
the techniques developed in \cite{basrak18}, specifically devoted to
the study of clusters in time series.

The appendices describe a permutation-invariant metric on the space of
sequences, state the result on isometries of $\ell_p$, and present
several useful results concerning vague convergence of measures on the
space of counting measures with $\ell_p$ metrics.

\section{Extremal processes and random walks of growing dimension} 
\label{sec:main_result}

\subsection{Metrics on the space of counting measures}
\label{sec:metr-space-count}

For $p\in[1,\infty)$, let $\ell_p$ be the space of $p$-summable
sequences $x=(x_1,x_2,\ldots)$ with the norm
$\|x\|_p=\big(\sum_{i=1}^\infty |x_i|^p)^{1/p}$ and the metric
$\rho_p(x,y)=\|x-y\|_p$. The space $\ell_\infty$ consists of bounded
sequences with the norm $\|x\|_\infty=\sup_i |x_i|$.  Denote by
$\rhoH$ the Hausdorff distance between subsets of $\ell_p$, namely,
\begin{displaymath}
  \rhoH(K,L)
  =\max\Big(\sup_{x\in K}\inf_{y\in L} \|x-y\|_p,
  \sup_{y\in L}\inf_{x\in K} \|x-y\|_p\Big)
\end{displaymath}
for $K,L\subset\ell_p$.

Let $\bN$ be the space of counting measures on $\R$ which are finite
on outside every neighbourhood of the origin, that is, on
$\{x:|x|>s\}$ for any $s>0$. The space $\bN$ is equipped with the
$\sigma$-algebra $\sN$ which makes the maps $\mu\mapsto\mu(A)$
measurable for all Borel $A\subset\R$.  In the following we denote
\begin{equation}
  \label{eq:mu-infty}
  \|\mu\|_\infty=\inf\{s>0:\mu((-\infty,-s)\cup(s,\infty))=0\},
\end{equation}
which is finite for all $\mu\in\bN$. By $\delta_t$ we denote the Dirac
measure at $t\in\R$. 

Distances between counting measures have been introduced in
\cite{MR1190904} and \cite{MR2454027} by taking the smallest
$\ell_p$-distance over all possible matchings of their
atoms. Following this, denote by $\dm_p(\mu,\mu')$ the infimum of
$\ell_p$ distance between all sequences obtained by enumerating the
atoms of $\mu$ and $\mu'$ and applying finite permutations.
A variant of this
distance (for counting measures on the positive half-line) obtained by
matching atoms ordered by decreasing value was
introduced in \cite{MR2271178} in view of application to convergence
of binomial point processes.
We write
\begin{displaymath}
  \|\mu\|_p=\dm_p(\mu,0), 
\end{displaymath}
and note that this definition complies with \eqref{eq:mu-infty} when
$p=\infty$. In the following we also need the Hausdorff distance
between subsets of $\bN$, which is denoted by $\dH$.
The metric $\dm_p$ identifies counting measures with sequences modulo
finite permutations and allows us to treat vectors of varying
dimension in a permutation-invariant way.

We say that $\mu_n\in\bN$ \emph{vaguely converges} to $\mu\in\bN$ and
write $\mu_n\vto\mu$ if
\begin{displaymath}
  \int f\,d\mu_n \to \int f\,d\mu\quad \text{as}\; n\to\infty
\end{displaymath}
for all continuous bounded $f:\R\to\R$ with support contained in 
$\{x:|x|>s\}$ for some $s>0$, that is, $f(x)=0$ if $|x|\leq s$.

It is well known (see, e.g., \cite[Proposition~2.8]{bas:plan19}) that
$\mu_n\vto\mu$ if and only if there is an enumeration of the atoms of
$\mu_n$ which ensures their pointwise convergence to the atoms of
$\mu$, that is, it is possible to write $\mu_n$ as
$\sum_m \delta_{x_{nm}}$ and $\mu$ as $\sum_m \delta_{y_m}$ with
$x_{nm}\to y_m$ as $n\to\infty$ for all $m\in\NN$.  If $\mu_n\vto\mu$
for $\mu_n,\mu\in\bN$, then the convergence of the enumerations implies
that $\|\mu_n\|_\infty\to \|\mu\|_\infty$, and $\mu_n\vto\mu$ if and
only if $\dm_\infty(\mu_n,\mu)\to0$ as $n\to\infty$. Convergence in
$\dm_p$ with $p<\infty$ is stronger than the vague convergence.

A counting measure is scaled by applying the scaling map to
all its atoms, so that $t\mu=\sum \delta_{tz_i}$ if
$\mu=\sum \delta_{z_i}\in\bN$ and $t>0$. The scaling operation is
continuous in $\dm_p$. 

We also need to define the \emph{vague convergence} of measures on
$(\bN,\sN)$. For this, it is essential to specify a family of bounded
subsets of $\bN$ (also called a boundedness, bornology, or ideal), see,
e.g., \cite{bas:plan19} and \cite[Section~2.3.1]{bmm25}. For
$p\in[1,\infty]$, define the ideal $\sS_p$ as the family of all sets
$\sA\subset \bN$ such that there exists an $s>0$ with $\|\mu\|_p>s$
for all $\mu\in\sA$.

Consider measures $\nu$ and $\nu_n$, $n\geq1$, on $(\bN,\sN)$, which
are finite on all measurable sets from $\sS_p$.  We say that $\nu_n$
converges vaguely to $\nu$ on the ideal $\sS_p$ and write
$\nu_n \vstop\nu$ if
\begin{equation}
  \label{eq:vsto}
  \int gd\nu_n\to\int gd\nu\quad \text{as}\; n\to\infty
\end{equation}
for each bounded function $g:\bN\to\R$ continuous in the
$\dm_p$-metric and with support in some $\sA\in\sS_p$. Denote
\begin{equation}
  \label{eq:ov-nu}
  \overline{\nu}_p((s,\infty))
  =\nu\big(\{\mu\in\bN: \|\mu\|_p>s\}\big),
  \quad s>0.
\end{equation}

Define the map $\me:\R^d\to \bN$ by letting
\begin{equation}
  \label{eq:chi}
  \me \big((x_1,\dots,x_d)\big)=\delta_{x_1}+\cdots+\delta_{x_d}.
\end{equation}
Note that $\|\me(x)\|_p=\|x\|_p$ for all $x\in\R^d$.  We introduce a
pseudometric $\rho_{\Pi,p}(x,y)$ defined for $x,y\in\ell_p$ as the infimum
of $\|\pi(x)-y\|_p$ over all finite permutations $\pi$ applied to $x$,
see \eqref{eq:rhoPi}.
By Lemma~\ref{le:best_matching_k} with $p<\infty$, we have
$\rho_{\Pi,p}(x,y)=\dm_p(\me(x),\me(y))$, that is, the quotient of
$\ell_p$ by finite permutations equipped with the metric induced by
$\rho_{\Pi,p}$ is isometric to $\bN$ with metric $\dm_p$.  For
$x\in\R^d$, we denote by
\begin{displaymath}
  |x|=(|x_1|,\dots,|x_d|)
\end{displaymath}
the vector composed of the absolute values of the components of $x$.

If $K$ and $L$ are subsets of $\ell_p$, then the Hausdorff distance
between them up to isometries is defined as the infimum of
$\rhoH(U(K),U'(L))$ after optimising over all isometries
$U,U':\ell_p\to\ell_p$. See Theorem~\ref{th:isometries} for the
characterisation of such isometries.

\subsection{Main results}
\label{sec:main-results}

Let
\begin{displaymath}
  X_i^{(d)}=(X_{i1}^{(d)},\dots, X_{id}^{(d)}), \quad i\in\NN,
\end{displaymath}
be a sequence of independent and identically distributed random vectors in
$\R^{d}$, which are independent copies of a random vector $X^{(d)}$.
Recall the sequence of consecutive componentwise maxima from
\eqref{eq:M-seq} and the corresponding finite set $\bM_n^{(d)}$ from
\eqref{eq:sMn}, which are constructed, assuming that the random vectors
have nonnegative components. Furthermore, recall that $\bW_n^{(d)}$ is
the set of values of the random walk with increments given by
$X_1^{(d)},\dots, X_n^{(d)}$, see \eqref{eq:W-seq}. We fix a
$p\in[1,\infty]$ and endow $\bM_n^{(d)}$ and $\bW_n^{(d)}$ with the
$\ell_p$-metric as a subspace of $\R^d$.

With each random vector $X^{(d)}$ in $\R^d$ we associate the random
counting measure defined as
\begin{displaymath}
  \chi^{(d)}=\me(X^{(d)}).
\end{displaymath}
The random counting measures $\chi_i^{(d)}=\me(X_i^{(d)})$, $i\in\NN$,
derived from i.i.d.\ copies of $X^{(d)}$, are i.i.d.\ copies of
$\chi^{(d)}$.

Let $a_{n,d}$ be a sequence depending on $n$ and $d$, where $d=d(n)$
depends on $n$ so that $d(n)\to\infty$ as $n\to\infty$. We usually
suppress the dependence on $d$ and write $a_{n}=a_{n,d(n)}$. 
The first condition concerns the vague convergence of the normalised
counting measures generated by the components of $X^{(d)}$.

\begin{enumerate}
\item[(A)] We have
  \begin{equation}
    \label{eq:nu}
    n \Prob{a_n^{-1} \chi^{(d)} \in \cdot}
    \vstop \nu(\cdot)\quad \text{as}\; n\to\infty
  \end{equation}
  with some $p\in[1,\infty]$ and a nontrivial measure $\nu$ on the
  space $\bN$, which is necessarily finite on sets from $\sS_p$.  If
  $p\in[1,\infty)$, assume also that
  \begin{equation}
    \label{eq:levy-integral}
    \int \min(1,\|\mu\|_p^p)\nu(\!\d\mu)<\infty.
  \end{equation}
\end{enumerate}

In the following we call $\nu$ the \emph{tail measure} of
$\chi^{(d)}$. If $\nu$ is supported by finite counting measures, it is
called regular; otherwise, singular. If the dimension $d$ is fixed,
condition \eqref{eq:nu} becomes the regular variation property of
$\chi^{(d)}$. With varying $d$, \eqref{eq:nu} is a kind of regular
variation property formulated for a triangular array scheme, see
\cite[Remark~3.19]{bmm25}. Unlike the case of a fixed 
dimension, where it is possible to infer from \eqref{eq:nu} that $\nu$
is homogeneous, such a conclusion does not follow in our setting,
since $\chi^{(d)}$ changes with $d$. Note that for 
\eqref{eq:nu} to hold, a particular growth regime of $d(n)$ in
relation to $n$ 
may be necessary.
By \cite[Theorem~16.23]{MR1422917}, \eqref{eq:levy-integral} holds if
$a_n^{-1}(\|\chi_1^{(d)}\|_p^p+\cdots+\|\chi_n^{(d)}\|_p^p)$ converges in
distribution to a nontrivial random variable.

Note that maxima and sums of random vectors cannot be directly related
to operations on counting measures, because the order of components of
a random vector is lost when passing to the corresponding counting
measure using \eqref{eq:chi}. Therefore, it is not possible to
directly apply \eqref{eq:nu} in order to obtain a limit theorem for
the partial maxima or the sums of random vectors.

Fortunately, the $\ell_p$-norm of consecutive maxima (or sums) of
random vectors can be approximated by maxima (or sums) of the norms of
the involved vectors if pairwise minima of these random vectors are
negligible, meaning that the large components of the independent
random vectors $X_i^{(d)}$ and $X_j^{(d)}$ do not overlap
asymptotically for $i\neq j$.  This fact is a property of the sequence
$a_n^{-1}(|X_1^{(d)}|\wedge |X_2^{(d)}|)$, $n\geq1$, obtained by
taking the coordinatewise minima of the absolute values for two
independent copies of $X^{(d)}$ and normalised by the same sequence
$(a_n)_{n\in\N}$ as in \eqref{eq:nu}.  This brings us to the following
condition, which replaces the asymptotic orthogonality condition from
\cite{MR4774174}.

\begin{enumerate}
\item[(B)] For all $\eps,s>0$ and the sequence $(a_n)$ from (A), as
  $n\to\infty$, 
  \begin{equation}
    \Prob{\big\||X_1^{(d)}|\wedge |X_2^{(d)}|\big\|_\infty >\eps
      a_n\,\Big|\, \|X_1^{(d)}\|_\infty>sa_n, \|X_2^{(d)}\|_\infty>sa_n}
    \to 0.
  \end{equation}
\end{enumerate}

By Lemma~\ref{lemma:norm}, condition (B) follows if, for all
$\eps>0$,
\begin{equation}
  \label{equ_con_4}
  n^2\Prob{\big\||X_1^{(d)}|\wedge |X_2^{(d)}|\big\|_\infty >\eps a_n}
  \to 0 \quad \text{as} \; n \to \infty.
\end{equation}

The next condition addresses the integrability property of random
vectors.

\begin{enumerate}
\item[(C)] In the maximum scheme with $p\in[1,\infty)$,
  \begin{equation}
    \label{eq:b1}
    \lim_{s \downarrow 0} \limsup_{n \to \infty}
    \frac{n}{a_n^p}\E\Big[\big\|X_1^{(d)}\big\|_p^p
    \one_{\|X_1^{(d)}\|_p\leq sa_n}\Big]=0,
  \end{equation}
  and in the random walk setting with $p\in[1,\infty]$,
  \begin{equation}
    \label{eq:b}
    \lim_{s \downarrow 0} \limsup_{n \to \infty}
    \frac{n}{a_n}\E\Big[\big\|X_1^{(d)}\big\|_p
    \one_{\|X_1^{(d)}\|_p\leq sa_n}\Big]=0.
  \end{equation}
\end{enumerate}

Condition \eqref{eq:b} implies \eqref{eq:b1}, since
\begin{displaymath}
  a_n^{-p}\E\Big[\|X_1^{(d)}\|_p^p\one_{\|X_1^{(d)}\|_p\leq
    sa_n}\Big]
  \leq s^{p-1} a_n^{-1}\E\Big[\|X_1^{(d)}\big\|_p
    \one_{\|X_1^{(d)}\|_p\leq sa_n}\Big]. 
\end{displaymath}

Now we describe the limiting objects. Let
\begin{equation}
  \label{eq:eta}
  \eta=\sum \delta_{(t_k,\mu_k)}
\end{equation}
be the Poisson process on $\R_+\times \bN$ with the intensity measure
being the product $\lambda\otimes\nu$ of the Lebesgue measure
$\lambda$ and the tail measure $\nu$ from \eqref{eq:nu}. Define a
process $\upsilon_t$ with values in $\bN$ by letting $\upsilon_0=0$ and
\begin{equation}
  \label{eq:sY}
  \upsilon_t = \sum_{k: t_k\leq t} \mu_k,
  \quad t>0.
\end{equation}
Then $\sY_T=\{\upsilon_t: 0\leq t\leq T\}$ is a subset of $\bN$.

Furthermore, let $Y_0=0$ and
\begin{equation}
  \label{eq:Y}
  Y_t = \sum_{k: t_k\leq T} e_k\|\mu_k\|_p\one_{t_k\leq t},
  \quad 0\leq t\leq T,
\end{equation}
where $\{e_k,k\geq1\}$ is the standard basis in the space of
sequences. Note that the definition of $Y_t$ relies on the ordering of
all points of $\eta$ in $[0,T]\times\bN$. Then $Y_t$, $t\geq 0$, is a
stochastic process with values in the space $\ell_p$ of sequences,
which is called a \emph{crinkled subordinator} in \cite{MR4774174} for
$p=2$. By (A), $Y_t$ belongs to $\ell_p$, and we have
\begin{displaymath}
  \|Y_t\|_p=\|\sY_t\|_p.
\end{displaymath}
Let $\bY_T=\mathrm{cl}\{Y_t: 0\leq t\leq T\}$ be the closed range of
the crinkled subordinator, which is a subset of $\ell_p$.
The set $\bY_T$ with the $\ell_p$-metric is isometric to $\R_+$
with the (random) metric given by
\begin{displaymath}
  \rho_{\nu,p}(t,t')=
  \begin{cases}
    \Big(\sum_{t<t_k\leq t'} \|\mu_k\|_p^p\Big)^{1/p}, &
    p\in[1,\infty),\\
    \sup_{t<t_k\leq t'} \|\mu_k\|_\infty, & p=\infty,
  \end{cases}
  \quad 0\leq t\leq t'\leq T,
\end{displaymath}
and extended symmetrically for $t'\leq t$. 
Indeed, $\|Y_{t'}-Y_t\|_p=\rho_{\nu,p}(t,t')$.  For the construction
of the crinkled subordinator and the corresponding norm, it is only
necessary to know what the measure $\overline{\nu}_p$ is, see
\eqref{eq:ov-nu}.

Our results are of three closely related kinds. First, we consider the
convergence of the images of $\bM_{\nT}^{(d)}$ and $\bW_{\nT}^{(d)}$
under the map $\me$. These images $\sM_{\nT}^{(d)}$ and $\sW_{\nT}^{(d)}$
are compact subsets of $\bN$ and we prove their convergence in
distribution with respect to the Hausdorff metric. Second, we consider
convergence of metric spaces obtained by considering $\bM_{\nT}^{(d)}$
and $\bW_{\nT}^{(d)}$ as subsets of $\ell_p$ equipped
with the Hausdorff metric up to isometries.
Finally, we infer from this the convergence in the
Gromov--Hausdorff sense.

\begin{theorem}
  \label{th:single-jump}
  Assume that (A), (B) and (C) hold for some $p\in[1,\infty]$
  and a nontrivial measure $\nu$ on the space $\bN$.  Then, for each
  $T>0$, the following statements hold.
  \begin{enumerate}[(I)]
  \item The random sets $a_n^{-1}\sM_{\nT}^{(d)}$ and
    $a_n^{-1}\sW_{\nT}^{(d)}$ converge in distribution as $n\to\infty$
    (with respect to the Hausdorff distance $\dH$ on $\bN$) to $\sY_T$.
  \item The random sets $a_n^{-1}\bM_{\nT}^{(d)}$ and
    $a_n^{-1}\bW_{\nT}^{(d)}$ converge in distribution as $n\to\infty$
    (with respect to the Hausdorff distance up to isometries of
    $\ell_p$) to $\bY_T$.
  \item The random metric spaces
    $\big(a_n^{-1}\bM_{\nT}^{(d)},\|\cdot\|_p\big)$ and
    $\big(a_n^{-1}\bW_{\nT}^{(d)},\|\cdot\|_p\big)$ converge in
    distribution as $n\to\infty$ (with respect to the Gromov--Hausdorff
    distance) to $\big([0,T],\rho_{\nu,p}\big)$.
  \end{enumerate}
\end{theorem}

The key effect implied by the growing dimension setting is that in
\eqref{eq:Y} the different $\mu_k$ contribute additively to the metric
of the limiting space, even in the maximum scheme. If the dimension
does not grow, then this is no longer the case and the ordering of the
atoms of $\mu_k$ influences the limit. For instance, in the maximum
scheme with $p=\infty$, the limiting object is the path of the
extremal process generated by a point process on $\R_+\times\R^d$, see
\cite{res87}.

Note that the statements (I) and (II) of Theorem~\ref{th:single-jump}
imply the convergence of the corresponding metric spaces in the
Gromov--Hausdorff metric, e.g.,
$\big(a_n^{-1}\sM_{\nT}^{(d)},\dm_p\big)$ converges to $(\sY_T,\dm_p)$
and $\big(a_n^{-1}\bM_{\nT}^{(d)},\|\cdot\|_p\big)$ converges to
$(\bY_T,\|\cdot\|_p)$.

If $p=2$, then the limiting metric space $(\bY_T,\|\cdot\|_p)$
coincides with the limit obtained in \cite{MR4774174}, namely, (II)
and (III) hold, while (I) does not have an analogue in
\cite{MR4774174}. In this case, it is possible to work with the
$\ell_2$-norms of the vectors and counting measures, including those
appearing in the limit, and so represent the limiting space as a
subset of the space of square summable sequences. Condition (A)
implies a regular variation type property of the $\ell_2$ norms of the
increments, and the
random walk variant \eqref{eq:b} of (C) yields that each individual
component of the increments is asymptotically negligible. In general,
Condition (B) is not directly comparable with the orthogonality
condition imposed in \cite{MR4774174}.

\section{Proof of \protect{Theorem~\ref{th:single-jump}}}
\label{sec:proof-theorem-ref}

The proof relies on approximating the sets $\sM_{\nT}^{(d)}$ and
$\sW_{\nT}^{(d)}$ by retaining only those increments whose
$\ell_p$-norms exceed a fixed threshold.  Fix an $s>0$, and let
\begin{equation*}
  X_{k}^{(d)}(s)= X_k^{(d)} \one_{\|X_k^{(d)}\|_p>a_ns},
  \quad k \geq 1.
\end{equation*}
These vectors yield the sets
\begin{align*}
  \bM_{\nT}^{(d)}(s)
  &=\Big\{0,\bigvee_{i=1}^j X_{i}^{(d)}(s),\; j=1,\dots,\nT\Big\},\\
  \bW_{\nT}^{(d)}(s)
  &=\Big\{0,\sum_{i=1}^j X_{i}^{(d)}(s),\; j=1,\dots,\nT\Big\},
\end{align*}
and their images $\sM_{\nT}^{(d)}(s)$ and $\sW_{\nT}^{(d)}(s)$ under
the map $\me$, which are used as approximations of
$\sM_{\nT}^{(d)}$ and $\sW_{\nT}^{(d)}$.
The limiting set is approximated using the same parameter $s>0$ by
letting $\sY_T^{(s)}$ be the set of $\upsilon_t^{(s)}$,
$0\leq t\leq T$, with
\begin{displaymath}
  \upsilon_t^{(s)}=\sum_{k: t_k\leq t,\|\mu_k\|_p>s} \mu_k,\quad 
  0\leq t\leq T. 
\end{displaymath}
For each $T>0$, almost surely there are almost surely finitely many points
$\big(t_{k_i}, \mu_{k_i}\big)$, $i=1,\dots,N_T(s)$, of $\eta$ with
$\|\mu_{k_i}\|_p>s$ and $t_{k_i}\leq T$.  This number is a Poisson
random variable $N_T=N_T(s)$ with mean $T\overline{\nu}_p((s,\infty))$, see
\eqref{eq:ov-nu}.  Without loss of generality assume that
$t_{k_1}\leq t_{k_2}\leq\cdots\leq t_{k_{N_T}}$.
Furthermore, denote
\begin{displaymath}
  Y_t^{(s)}=\sum_{k:t_k\leq T}
  e_k\|\mu_k\|_p\one_{t_k\leq t}\one_{\|\mu_k\|_p>s},
  \quad 0\leq t\leq T,
\end{displaymath}
and let $\bY_T^{(s)}=\mathrm{cl}\{Y_t^{(s)},0\leq t\leq T\}$.

By \cite[Theorem~3.2]{MR1700749}, Statements (I) and (II) follow from the
following three lemmas. Finally, (III) follows from (II) and the fact
that the metric space $(\bY_T,\|\cdot\|_p)$ is isometric to
$([0,T],\rho_{\nu,p})$.

First, we address the convergence of approximations of the limiting
sets. 

\begin{lemma}
  \label{le:1}
  The set $\sY_T^{(s)}$ converges in probability to $\sY_T$, and the
  set $\bY_T^{(s)}$ converges in probability to $\bY_T$ (in the
  Hausdorff metric on $\bN$ and $\ell_p$, respectively) as
  $s \downarrow 0$.
\end{lemma}
\begin{proof}
  If $p=\infty$, the statement follows from the inequalities
  \begin{align*}
    \dH[\infty]\big(\sY_T^{(s)}, \sY_T\big)
    \leq \sup_{0\leq t\leq T}
      \dm_\infty(\upsilon_t^{(s)},\upsilon_t)
    \leq \sup_{k: t_k\leq T, \|\mu_k\|_\infty\leq s} \|\mu_k\|_\infty
      \leq s
  \end{align*}
  and $\rhoHI(\bY_T^{(s)},\bY_T)\leq s$ with the convention that the
  supremum over the empty set equals 0.  If $p\in[1,\infty)$, then the
  inequality $\dm_p(\mu+\mu',\mu)\leq \|\mu'\|_p$ yields that
  \begin{align*}
    \dH\big(\sY_T^{(s)}, \sY_T\big)
    % &\leq \sup_{0\leq t\leq T}
    % \dm_p(\upsilon_t^{(s)},\upsilon_t)\\
    \leq \dm_p\Big(\sum_{0\leq t_k\leq T, \|\mu_k\|_p\leq s}\mu_k,0\Big)
    =\Big(\sum_{k: t_k\leq T} \|\mu_k\|_p^p\one_{\|\mu_k\|_p\leq
    s}\Big)^{1/p}
  \end{align*}
  and
  \begin{displaymath}
    \rhoH(\bY_T^{(s)},\bY_T)\leq
    \Big(\sum_{k:t_k\leq T} \|\mu_k\|_p^p \one_{\|\mu_k\|_p\leq s}\Big)^{1/p}. 
  \end{displaymath}
  By the Markov inequality, the right-hand side
  converges in probability to zero, since by \eqref{eq:levy-integral}
  \begin{displaymath}
    \E \sum_{k: t_k\leq T}\|\mu_k\|_p^p
    \one_{\|\mu_k\|_p\leq s}
    \leq T \int_{(0,s]} \min(1,x^p)\overline\nu_p(\d x)\to 0
    \quad \text{as}\; s\downarrow 0. \qedhere
  \end{displaymath}
\end{proof}

The next result provides a uniform bound on the Hausdorff distance
between the approximating sets. Recall that the Hausdorff metric on
$\bN$ is denoted by $\dH$ and on $\ell_p$ by $\rhoH$ and that in the
maximum scheme all vectors are assumed to have nonnegative
components. 

\begin{lemma}
  \label{le:2}
  Assume that (C) holds (apart from the maximum scheme with
  $p=\infty$, where it is not required). Then, for all $\eps>0$, 
  \begin{align*}
    \lim_{s \downarrow 0} \limsup_{n \to \infty}
    \Prob{\dH \Big( a_n^{-1}\sM_{\nT}^{(d)}(s),
      a_n^{-1}\sM_{\nT}^{(d)} \Big) > \eps} = 0,\\
    \lim_{s \downarrow 0} \limsup_{n \to \infty}
    \Prob{\dH \Big( a_n^{-1}\sW_{\nT}^{(d)}(s),
      a_n^{-1}\sW_{\nT}^{(d)} \Big) > \eps} = 0,
  \end{align*}
  and 
  \begin{align}
    \label{equ_ful_3}  
    \lim_{s \downarrow 0} \limsup_{n \to \infty}
    \Prob{\rhoH\Big( a_n^{-1}\bM_{\nT}^{(d)}(s),
    a_n^{-1}\bM_{\nT}^{(d)} \Big) > \eps} = 0,\\
    \label{equ_ful_3Z}   
    \lim_{s \downarrow 0} \limsup_{n \to \infty}
    \Prob{\rhoH \Big( a_n^{-1}\bW_{\nT}^{(d)}(s),
      a_n^{-1}\bW_{\nT}^{(d)} \Big) > \eps} = 0.
  \end{align}
\end{lemma}
\begin{proof}
  Since $\dm_p(\me(x),\me(y))\leq \|x-y\|_p$, the corresponding
  Hausdorff distances satisfy the same inequality, and therefore it
  suffices to prove the second set of statements. The Hausdorff
  distance between $a_n^{-1}\bM_{\nT}^{(d)}$ and its approximation
  $a_n^{-1}\bM_{\nT}^{(d)}(s)$ is bounded by
  \begin{align*}
    \max_{0\leq m\leq \nT}a_n^{-1}\Big\|X_1^{(d)}
    &\one_{\|X_1^{(d)}\|_p> sa_n} \vee
      \cdots\vee X_m^{(d)}\one_{\|X_m^{(d)}\|_p> sa_n}
      -X_1^{(d)}\vee\cdots\vee X_m^{(d)}\Big\|_p\\
    &\leq \max_{0\leq m\leq \nT}a_n^{-1}
      \Big\|X_1^{(d)}\one_{\|X_1^{(d)}\|_p\leq sa_n}
      \vee \cdots\vee X_m^{(d)}\one_{\|X_m^{(d)}\|_p\leq sa_n}\Big\|_p\\
    &= a_n^{-1}\Big\|X_1^{(d)}\one_{\|X_1^{(d)}\|_p\leq sa_n}
      \vee \cdots\vee X_{\nT}^{(d)}\one_{\|X_{\nT}^{(d)}\|_p\leq
      sa_n}\Big\|_p. 
  \end{align*}
  If $p=\infty$, then the right-hand side is at most $s$.  For
  $p\in[1,\infty)$ and any $\eps>0$, the Markov inequality yields that
  \begin{align*}
    &\Prob{\Big\|X_1^{(d)}\one_{\|X_1^{(d)}\|_p\leq sa_n}
      \vee \cdots\vee X_{\nT}^{(d)}\one_{\|X_{\nT}^{(d)}\|_p\leq
      sa_n}\Big\|_p\geq\eps a_n}\\
    &\hspace{2cm}=\P\bigg\{\sum_{j=1}^d
      \big(X_{1j}^{(d)}\big)^p\one_{\|X_1^{(d)}\|_p\leq sa_n}
      \vee\cdots\vee \big(X_{\nT j}^{(d)}\big)^p
      \one_{\|X_{\nT}^{(d)}\|_p\leq sa_n}
      \geq\eps^p a_n^p\bigg\}\\
    &\hspace{2cm}\leq\frac{1}{\eps^p a_n^p}
      \E\sum_{i=1}^{\nT}\sum_{j=1}^{d}
      \big(X_{ij}^{(d)}\big)^p\one_{\|X_i^{(d)}\|_p\leq sa_n}
      \leq\frac{nT}{\eps^p a_n^p}\E\Big(\|X_1^{(d)}\|_p^p\one_{\|X_1^{(d)}\|_p\leq
      sa_n}\Big),
  \end{align*}
  so that \eqref{equ_ful_3} holds by (C).  For the sets generated by
  random walks and any $p\in[1,\infty]$, we have 
  \begin{align*}
    \rhoH\big(a_n^{-1}\bW_{\nT}^{(d)}(s),a_n^{-1}\bW_{\nT}^{(d)}\big)
    &\leq a_n^{-1}\max_{1\leq m\leq \nT}
      \Big\|\sum_{i=1}^{m}X_i^{(d)}\one_{\|X_i^{(d)}\|_{p}>sa_n}
      -\sum_{i=1}^{m}X_i^{(d)}\Big\|_{p}\\
    &\leq a_n^{-1}\max_{1\leq m\leq {\nT}}
      \Big\|\sum_{i=1}^{m}X_i^{(d)}
      \one_{\|X_i^{(d)}\|_{p}\leq sa_n}\Big\|_{p}\\
    &\leq a_n^{-1}
      \sum_{i=1}^{\nT}\big\|X_i^{(d)}\big\|_{p}
      \one_{\|X_i^{(d)}\|_{p}\leq sa_n}.
  \end{align*}
  Applying the Markov inequality together with Condition (C) yields
  \eqref{equ_ful_3Z}.
\end{proof}

\begin{lemma}
  \label{le:3}
  Assume that conditions (A) and (B) hold. Then, for every
  $s>0$, the sets $a_n^{-1}\sM_{\nT}^{(d)}(s)$,
  $a_n^{-1}\sW_{\nT}^{(d)}(s)$ converge in distribution to
  $\sY_T^{(s)}$ (in the Hausdorff metric $\dH$), and the sets
  $a_n^{-1}\bM_{\nT}^{(d)}(s)$, $a_n^{-1}\bW_{\nT}^{(d)}(s)$ converge
  in distribution to $\bY_T^{(s)}$ (with respect to the Hausdorff
  metric up to isometries of $\ell_p$) as $n\to\infty$.
\end{lemma}
\begin{proof}
  By Lemma~\ref{lemma:pp} and \cite[Proposition~3.21]{res87},
  \begin{equation}
    \label{eq:eta_convergence}
    \eta_{n}=\sum_{k=1}^{\infty}\delta_{(k/n,a_n^{-1}\chi_k^{(d)})}
    \to
    \eta=\sum_{k\geq 1}\delta_{(t_k,\mu_k)}\quad \text{as}\; n\to\infty
  \end{equation}
  in distribution on $[0, \infty) \times \bN$.  By a version of
  Skorokhod's representation theorem from \cite{MR3266500}, the
  convergence in distribution can be replaced by almost surely
  convergence of a distributional copy of the sequence
  $(X_k^{(d)})_{k\in\N}$ and a distributional copy of the Poisson
  point process $\eta$ defined on a new probability space
  $(\bar{\Omega}, \bar{\mathcal{F}}, \bar{\P})$. To simplify
  presentation, we will not use the special notation for these
  distributional copies and all further objects (like the crinkled
  subordinator or the metric) constructed from them.

  By Proposition~3.13 in \cite{res87} and \cite{bas:plan19}, for all
  sufficiently large $n\in\N$ (ensuring that $\nT\geq N_T$), there
  exist an enumeration
  $\big(n^{-1}k_j(n),a_n^{-1}\chi_{k_j(n)}^{(d)}\big)$,
  $j=1,\dots,N_T$, of the atoms of $\eta_n$ and an enumeration
  $\mu_{k_j}$, $j=1,\dots,N_T$, of $\eta$ with both point processes
  restricted to $[0,T]\times \big\{\mu\in\bN:\|\mu\|_p>s\big\}$ such
  that
  \begin{equation}
    \label{eq:result_1}
    \frac{k_j(n)}{n}\to t_{k_j}\quad
    \text{and}\quad 
    \dm_p(a_n^{-1}\,\chi_{k_j (n)}^{(d)},\mu_{k_j})\to 0
    \quad \text{as}\; n\to\infty,\; j = 1,\ldots,N_T,
  \end{equation}
  and all other points of $\eta_n$ have limits outside of
  $[0,T]\times \{\mu\in\bN:\|\mu\|_p>s\}$.
  % The continuous mapping theorem implies that
  % \begin{equation}
  %   \label{equ_result_2}
  %   \big\|a_n^{-1}X_{k_j(n)}^{(d)}\big\|_p
  %   = \big\|a_n^{-1}\chi_{k_j (n)}^{(d)}\big\|_p
  %   \to\|\mu_{k_j}\|_p\quad \text{as}\; n\to\infty,\;
  %   j = 1, \ldots, N_T.
  % \end{equation}

  Consider $\gamma>0$ distinct from the absolute values of
  all atoms of $\mu_{k_j}$, $j=1,\dots,N_T$. Define
  \begin{equation}
    \label{eq:Ikin}
    I_{k_j(n)}=\Big\{i\in \{1,\ldots,d\}:
    \big|X_{k_j(n)i}^{(d)}\big|>a_n\gamma\Big\},
    \quad j=1,\dots,N_T,
  \end{equation}
  and 
  \begin{align*}
    X_{k_j(n)}^{(d),\gamma}
    &=\big(X_{k_j(n)1}^{(d)}\one_{1\in I_{k_j(n)}},
    \ldots, X_{k_j(n)d}^{(d)}\one_{d\in I_{k_j(n)}}\big).
  \end{align*}
  Similarly, $\mu_{k_j}^\gamma$ is obtained from $\mu_{k_j}$
  by removing all atoms located in $[-\gamma,\gamma]$.
  % Since $\nu$ is supported by finite measures, all $\mu_{k_i}$ are
  % finite and
  % \begin{equation}
  %   \label{eq:smallest-atom}
  %   \gamma=\frac{1}{2}\min_{1\leq i\leq N_T}
  %   \min_{j\geq 1} |y_{k_i,j}|>0,
  % \end{equation}
  % where $y_{k_i,j}$, $j\geq 1$, are atoms of $\mu_{k_i}$,
  % $1\leq i\leq N_T$.  

  By \eqref{eq:result_1} and the continuous mapping theorem, for all
  $j=1,\dots,N_T$, 
  \begin{equation}
    \label{eq:6_bis}
    \dm_p\big(a_n^{-1}\me(X_{k_j(n)}^{(d),\gamma}),
    \mu_{k_j}^\gamma\big)\to 0 \quad \text{as}\; n\to\infty,
  \end{equation}
  and 
  \begin{equation}
    \label{eq:6_new}
    a_n^{-1}\big\|X_{k_j(n)}^{(d)}-X_{k_j(n)}^{(d),\gamma}\big\|_p
    \to \|\mu_{k_j}-\mu_{k_j}^\gamma\|_p \quad \text{as}\; n\to\infty.
  \end{equation}
  Denote by $\sD_n$ the event that the sets
  $I_{k_1(n)},\ldots,I_{k_{N_T}(n)}$ are pairwise disjoint and by
  $\sD_n^c$ its complement. Then
  \begin{align*}
    \P(\sD_n^c)
    &\leq \Prob{\max_{1\leq i\neq j\leq N_T}
      a_n^{-1}\big\||X_{k_i(n)}^{(d)}|\wedge
      |X_{k_j(n)}^{(d)}|\big\|_{\infty}\geq \gamma}\\
    &= \Prob{\max_{1\leq i\neq j\leq
      \nT}a_n^{-1}\big\||X_{i}^{(d)}|\one_{\|X_i^{(d)}\|_p>sa_n}\wedge
      |X_{j}^{(d)}|\one_{\|X_j^{(d)}\|_p>sa_n}\big\|_{\infty}\geq
      \gamma}\\
    &\leq  n^2T^2\Prob{\||X_{1}^{(d)}|\wedge
      |X_{2}^{(d)}|\|_{\infty}\geq \gamma a_n,
      \|X_1^{(d)}\|_p>sa_n, \|X_2^{(d)}\|_p>sa_n}\\
    &\leq  n^2T^2\Prob{\||X_{1}^{(d)}|\wedge
      |X_{2}^{(d)}|\|_{\infty}\geq \gamma a_n},
  \end{align*}
  which converges to 0 as $n\to\infty$ for each $\gamma>0$ by
  \eqref{equ_con_4}. Since
  \begin{displaymath}
    \Prob{\dH\big(a_n^{-1}\sW_{\nT}^{(d)} (s),\sY_T^{(s)}\big)>\eps}
    \leq \P(\sD_n^c)+\P\Big(\sD_n\cap\Big\{\dH\big(a_n^{-1}\sW_{\nT}^{(d)}
      (s),\sY_T^{(s)}\big)>\eps\Big\}\Big), 
  \end{displaymath}
  it suffices to restrict all events on $\sD_n$.

  For  $p\in[1,\infty]$,
  \begin{multline*}
    \dH\big(a_n^{-1}\sW_{\nT}^{(d)} (s),\sY_T^{(s)}\big)\\
    \leq 
    \max_{0 \leq m \leq N_T} \dm_p\Big(a_n^{-1}\me\Big(
    X_{k_1(n)}^{(d)} +\cdots+X_{k_m(n)}^{(d)}\Big),
    \sum_{i=1}^{m}\mu_{k_i}\Big)
    \leq A_1+A_2+A_3,
  \end{multline*}
  where
  \begin{align*}
    A_1&=\max_{0 \leq m \leq N_T}\dm_p\Big(a_n^{-1}\me\Big(
    X_{k_1(n)}^{(d)} +\cdots+X_{k_m(n)}^{(d)}\Big),
    a_n^{-1}\me\Big(
    X_{k_1(n)}^{(d),\gamma} +\cdots+X_{k_m(n)}^{(d),\gamma}\Big)\Big),\\
    A_2&=\max_{0 \leq m \leq N_T}\dm_p\Big(a_n^{-1}\me\Big(
    X_{k_1(n)}^{(d),\gamma} +\cdots+X_{k_m(n)}^{(d),\gamma}\Big),
    \sum_{i=1}^{m}\mu_{k_i}^\gamma\Big),\\
    A_3&=\max_{0 \leq m \leq N_T}\dm_p\Big(\sum_{i=1}^{m}\mu_{k_i}^\gamma,
    \sum_{i=1}^{m}\mu_{k_i}\Big).
  \end{align*}
  We have that
  \begin{multline*}
    A_1\leq \max_{1\leq m\leq N_T}
    a_n^{-1}\|X_{k_1(n)}^{(d)} +\cdots+X_{k_m(n)}^{(d)}-
    X_{k_1(n)}^{(d),\gamma}-\cdots-X_{k_m(n)}^{(d),\gamma}\|_p\\
    \leq a_n^{-1}\|X_{k_1(n)}^{(d)}-X_{k_1(n)}^{(d),\gamma}\|_p
    +\cdots+a_n^{-1}\|X_{k_{N_T}(n)}^{(d)}-X_{k_{N_T}(n)}^{(d),\gamma}\|_p 
  \end{multline*}
  and
  \begin{displaymath}
    A_3\leq \dm_p(\mu_{k_1}^\gamma, \mu_{k_1})+\cdots+
    \dm_p(\mu_{k_{N_T}}^\gamma, \mu_{k_{N_T}}).
  \end{displaymath}
  On the event $\sD_n$ the supports of the truncated vectors are
  disjoint, and thus
  \begin{displaymath}
    \me\big(X_{k_1(n)}^{(d),\gamma}+\cdots+X_{k_m(n)}^{(d),\gamma}\big)
    =\me\big(X_{k_1(n)}^{(d),\gamma}\big)
    +\cdots+\me\big(X_{k_m(n)}^{(d),\gamma}\big).
  \end{displaymath}
  Then,
  \begin{displaymath}
    A_2\leq \dm_p\Big(a_n^{-1}\me\Big(
    X_{k_1(n)}^{(d),\gamma}\Big), \mu_{k_1}^\gamma\Big)
    +\cdots+\dm_p\Big(a_n^{-1}\me\Big(
    X_{k_{N_T}(n)}^{(d),\gamma}\Big), \mu_{k_{N_T}}^\gamma\Big).
  \end{displaymath}
  Thus,
  \begin{multline*}
    \P\Big(\sD_n\cap\Big\{\dH\big(a_n^{-1}\sW_{\nT}^{(d)}
      (s),\sY_T^{(s)}\big)>\eps\Big\}\Big)
    \leq \Prob{\sum_{j=1}^{N_T}
      a_n^{-1}\|X_{k_j(n)}^{(d)}-X_{k_j(n)}^{(d),\gamma}\|_p>\eps/3}\\
    +\Prob{\sum_{j=1}^{N_T}\dm_p(\mu_{k_j}^\gamma,
      \mu_{k_j})>\eps/3}
    +\Prob{\sum_{j=1}^{N_T} \dm_p\Big(a_n^{-1}\me\Big(
      X_{k_j(n)}^{(d),\gamma}\Big), \mu_{k_j}^\gamma\Big)>\eps/3}.
  \end{multline*}
  By \eqref{eq:6_new}, the first term on the right-hand side converges
  to
  \begin{displaymath}
    \Prob{\sum_{j=1}^{N_T}
      \|\mu_{k_j}-\mu_{k_j}^\gamma\|_p>\eps/3},
  \end{displaymath}
  which can be made arbitrarily small by a choice of small
  $\gamma$. The same holds for the second term. The last term
  converges to zero as $n\to\infty$ by \eqref{eq:6_bis}.
  
  Finally, $\sW_{\nT}^{(d)}(s)=\sM_{\nT}^{(d)}(s)$ on the event
  $\sD_n$ and if $X_{k_i(n)}^{(d)}$ is replaced by
  $X_{k_i(n)}^{(d),\gamma}$, taking into account that all vectors
  are assumed to have nonnegative components in the maximum
  scheme. This shows the result for $\sM_{\nT}^{(d)}(s)$.

  Now we consider the sets $a_n^{-1}\bM_{\nT}^{(d)}(s)$ and
  $a_n^{-1}\bW_{\nT}^{(d)}(s)$. On the event $\sD_n$, construct an
  isometry $U:\ell_p\to\ell_p$ by mapping
  \begin{equation}
    \label{eq:Udef}
    U(e_j)=\|X_{k_j(n)}^{(d),\gamma}\|_p^{-1}
    \sum_{i\in I_{k_j(n)}} X_{k_j(n),i}^{(d),\gamma}
    e_i, \quad j=1,\dots,N_T.
  \end{equation}
  The restriction on $\sD_n$ is essential to ensure that the sets
  $I_{k_j(n)}$ given by \eqref{eq:Ikin} are disjoint. For $j>N_T$
  define $U(e_j)=e_k$ for some $k\notin\cup_j I_{k_j(n)}$ chosen in a
  consistent way. On the complement $\sD_n^c$, let $U$ be the
  identical map.  Thus, on the event $\sD_n$, we need to compare
  $U(\|\mu_{k_j}\|_pe_j)$ with $X_{k_j(n)}^{(d)}$. Calculating first the
  distance between $U(\|\mu_{k_j}\|_pe_j)$ and
  $U(\|X_{k_j(n)}^{(d),\gamma}\|_pe_j)$ yields that 
  \begin{multline*}
    \rhoH(a_n^{-1}\bW_{\nT}^{(d)}(s),U(\bY_T^{(s)}))
    \leq a_n^{-1}\sum_{j=1}^{N_T}\|X_{k_j(n)}^{(d)}-X_{k_j(n)}^{(d),\gamma}\|_p\\
    +\sum_{j=1}^{N_T} \big(\|\mu_{k_j}\|_p-\|\mu_{k_j}^\gamma\|_p\big)
    +\sum_{j=1}^{N_T} \big|a_n^{-1}\|X_{k_j(n)}^{(d),\gamma}\|_p -
    \|\mu_{k_j}^\gamma\|_p\big|. 
  \end{multline*}
  Letting $n\to\infty$ and $\gamma\downarrow0$ yields the result in
  the random walk scheme. The maximum scheme is similar, since the
  sums of nonnegative vectors coincide with their coordinatewise
  maxima on the event $\sD_n$. 
\end{proof}

\section{Examples}
\label{sec:examples}

\subsection{Regular tail measures}
\label{sec:exampl-regul-tail}

In the following we write $a_n\sim b_n$ if $\lim_{n\to\infty}
a_n/b_n=1$. We also often encounter the measure $\theta_\alpha$ on
$(0,\infty)$ defined by $\theta_\alpha((s,\infty))=s^{-\alpha}$,
$s>0$, with $\alpha>0$. 

\begin{example}
  \label{ex:iid}
  Let $X_1^{(d)} = (\xi_1,\ldots,\xi_d)$, where $\xi_1,\dots,\xi_d$
  are independent copies of a random variable $\xi$ such that its tail
  $\overline{F}(t)=\Prob{\xi> t}$ is regularly varying at infinity of
  order $\alpha>0$. Then $\overline{F}(t)=L(t)t^{-\alpha}$, $t>0$, for
  a slowly varying function $L$. We systematically use the Potter
  bound $L(t)\leq ct^\delta$, which is valid for any $\delta>0$ with a
  constant $c>0$ and for all sufficiently large $t$, see
  \cite[Section~B.2]{bmd}. Let
  \begin{equation}
  \label{eq_iid_a_n}
    a_n=\sup \big\{t: \overline{F}(t)>(nd)^{-1}\big\}.
  \end{equation}
  For instance, if $\overline{F}(t)=t^{-\alpha}$ for all sufficiently
  large $t$, then $a_n = (nd)^{1/\alpha}$ for all
  sufficiently large $n$. In general,
  \begin{equation}
    \label{eq:7}
    (nd)^{-1}\sim \overline{F}(a_n)=L(a_n)a_n^{-\alpha},
  \end{equation}
  and $a_n\geq (nd)^{1/\alpha-\delta}$ for all $\delta>0$ and
  all sufficiently large $n$. 
  
  We derive \eqref{eq:nu} for $p=\infty$ from
  Lemma~\ref{lemma:dtw}. Let $f$ be a bounded nonnegative continuous
  function supported by $(s,\infty)$. First,
  \begin{align}
    \label{eq:6_ex_1}
    n\E
    &\bigg[\exp\Big\{-\sum_{i=1}^d f(a_n^{-1}X_{1i}^{(d)})\Big\}
      \one_{\chi_{1}^{(d)}((a_ns,\infty))\geq 2}\bigg]
    \leq n \Prob{\sum_{i=1}^d \one_{\xi_i>a_ns}\geq 2}\notag\\
    &\leq n \binom{d}{2} \Prob{\xi>a_ns}^{2}
    =n \binom{d}{2} \overline{F}(sa_n)^{2}
    =n\binom{d}{2}\overline{F}(a_n)^{2}
      \bigg(\frac{\overline{F}(sa_n)}{\overline{F}(a_n)}\bigg)^{2}.
  \end{align}
  The last factor converges to $s^{-2\alpha}$ by the regular
  variation property of $\overline{F}$. The limit of the remaining
  part is zero by the choice of $a_n$.
  Thus, to check \eqref{eq:dtw}, we need to consider
  \begin{align*}
    n\E &\Big[\exp\Big\{-\sum_{i=1}^d f(a_n^{-1}X_{1i}^{(d)})\Big\}
    \one_{\chi_{1}^{(d)}((a_ns,\infty))=1}\Big]\\
    &=nd
    \E \Big[\exp\Big\{-\sum_{i=1}^d f(a_n^{-1}\xi_i)\Big\}
    \one_{\xi_1>a_ns}
    \one_{\xi_{2}\leq a_ns,\dots,\xi_d\leq a_ns}\Big]\\
    &=nd
    \E \Big[e^{-f(a_n^{-1}\xi)}
      \one_{\xi>a_ns}\Big]\Prob{\xi\leq a_ns}^{d-1},
  \end{align*}
  where we have used that $f(x)=0$ if $x\leq s$. Note that
  \begin{displaymath}
    \lim_{n\to\infty} \Prob{\xi\leq a_ns}^{d-1}
    =\exp\Big\{-\lim_{n\to\infty} (d-1)\overline{F}(a_ns)\Big\}=1.
  \end{displaymath}
  Denote $g(x)=e^{-f(x)}\one_{x>s}$.  Since $\xi$ is regularly varying
  with the tail measure $\theta_\alpha$ and $g$ is supported by
  $(s,\infty)$ and is discontinuous at one point of tail measure zero,
  \begin{displaymath}
    nd\E \Big[e^{-f(a_n^{-1}\xi)}\one_{\xi>a_ns}\Big]
    =nd\E g(a_n^{-1}\xi)\to \int g\d\theta_\alpha
    \quad\text{as}\; n\to\infty.
  \end{displaymath}
  Thus,
  \begin{multline*}
    n\E \Big[\exp\Big\{-\sum f(a_n^{-1}X_{1i}^{(d)})\Big\}
    \one_{\chi_{1}^{(d)}((a_ns,\infty))=1}\Big]
    \to \int_0^\infty
    e^{-f(x)}\one_{x>s}\theta_\alpha(\d x)\\
    =\int e^{-\int f \d\mu}\one_{\mu((s,\infty))\geq 1}
    \nu(\d\mu)\quad\text{as}\; n\to\infty,
  \end{multline*}
  so that \eqref{eq:nu} holds with $\nu$ being the pushforward of
  $\theta_\alpha$ under the map $x\mapsto\delta_x$.
  % This tail measure is clearly regular.
  
  Denote by $\xi'$ an independent copy of $\xi$.  Then,
  \begin{align*}
    n^2 \Prob{\|X_1^{(d)}\wedge X_2^{(d)}\|_\infty> \eps a_n} 
    & = n^2  \bigg(1 - \Big(\Prob{\xi \wedge \xi'
      \leq \eps a_n} \Big)^d \bigg) \\
    & \sim n^2 \Big(1 - \big(1 - \overline{F}(\eps a_n)^2 \big)^d \Big) \\
    &\sim n^2d\overline{F}(\eps a_n)^2
      \sim n^2d(n^{-1}d^{-1})^2
      \bigg(\frac{\overline{F}(\eps a_n)}{\overline{F}(a_n)}\bigg)^2,
  \end{align*}
  which converges to 0 as $n \to \infty$, so that (B) holds.  By
  Theorem~\ref{th:single-jump}, the random metric spaces built of
  partial maxima with the $\ell_{\infty}$ metric, after normalisation,
  converge in distribution to the space whose metric is induced by the
  crinkled subordinator, generated by the measure $\nu$ supported by
  Dirac measures as specified above.
  
  Now consider the $\ell_p$ case with $p\in[1,\infty)$ and assuming
  that $p>\alpha$. First, we check \eqref{eq:nu} by applying
  Lemma~\ref{lemma:lp-vague-total}. Define $(b_d)_{d\in\NN}$ from the
  requirement that $\overline{F}(b_d)\sim d^{-1}$. Then,
  $b_d^{-p}\sum_{i=1}^d \xi_i^p$ converges in distribution to a
  one-sided $(\alpha/p)$-stable distribution, see
  \cite[Theorem~16.25]{MR1422917}. Furthermore, $x_n=(a_n/b_d)^p$ is a
  monotone sequence which converges to infinity. By \cite{MR240854},
  \begin{align*}
    n\Prob{\|X_1^{(d)}\|_p>sa_n}
    &=n\Prob{\xi_1^p+\cdots+\xi_d^p>s^p b_d^p x_n}\\
    &\sim nd\Prob{\xi_1^p>s^p b_d^p x_n}=nd\Prob{\xi_1>sa_n}\to
      s^{-\alpha} \quad \text{as}\; n\to\infty. 
  \end{align*}
  Thus, \eqref{eq:p-norm-convergence} holds and
  Lemma~\ref{lemma:lp-vague-total} applies. Furthermore,
  \eqref{eq:levy-integral} holds, since
  \begin{displaymath}
    \int \min(1,\|\mu\|_p^p)\nu(\d \mu)
    =\int_{0}^{1}x^p\alpha  x^{-\alpha-1}\d x
    +\int_{1}^{\infty}\alpha x^{-\alpha-1}\d  x
    =\frac{\alpha}{p-\alpha}+1<\infty.
  \end{displaymath}
  
  Since $\xi^p$ is regularly varying of order $\alpha/p$, Karamata's
  theorem (see \cite[Section~B.4]{bmd}) implies that
  \begin{equation}
    \label{eq:con_last_iid}
    \lim_{x\to\infty}\frac{x\Prob{\xi^p>x}}
    {\E\big(\xi^p\one_{\xi^p\leq x}\big)}
    =\frac{1-\alpha/p}{\alpha/p}=\frac{p-\alpha}{\alpha}.
  \end{equation}
  Hence, 
  \begin{align*}
    \frac{n}{a_n^p}
      \E\big(\|X_1^{(d)}\|_p^p\one_{\|X_1^{(d)}\|_p\leq sa_n}\big)
    \leq \frac{nd}{a_n^p}\E\big(\xi^p\one_{\xi\leq
      sa_n}\big)
    \sim
      \frac{nd}{a_n^p}\frac{\alpha}{p-\alpha}(sa_n)^{p}
      \Prob{\xi>sa_n},
  \end{align*}
  so that \eqref{eq:b1} follows from 
  \begin{align*}
    \lim_{s\downarrow 0}\limsup_{n\to\infty}\frac{n}{a_n^p}
    \E\big(\|X_1^{(d)}\|_p^p\one_{\|X_1^{(d)}\|_p\leq sa_n}\big)
    &\leq \lim_{s\downarrow 0}\frac{\alpha}{p-\alpha}s^p
    \limsup_{n\to\infty}nd\Prob{\xi>sa_n}\\
    &=\lim_{s\downarrow 0}\frac{\alpha}{p-\alpha}s^{p-\alpha}=0.
  \end{align*}
  Condition \eqref{eq:b} for the random walk setting holds if
  $\alpha\in(0,1)$, since \eqref{eq:con_last_iid} with $p=1$ yields
  that 
  \begin{displaymath}
    \frac{n}{a_n}
      \E\big(\|X_1^{(d)}\|_p\one_{\|X_1^{(d)}\|_p\leq sa_n}\big)
    \leq \frac{nd}{a_n}\E\big(\xi\one_{\xi\leq sa_n}\big)
    \sim\frac{nd}{a_n}\frac{\alpha}{1-\alpha}(sa_n)\Prob{\xi>sa_n},
  \end{displaymath}
  and
  \begin{align*}
    \limsup_{n\to\infty}\frac{n}{a_n}
    \E\big(\|X_1^{(d)}\|_p\one_{\|X_1^{(d)}\|_p\leq sa_n}\big)
    &\leq \lim_{s\downarrow 0}\frac{\alpha}{1-\alpha}s
      \limsup_{n\to\infty}nd\Prob{\xi>sa_n}\\
    &=\lim_{s\downarrow 0}\frac{\alpha}{1-\alpha}s^{1-\alpha}=0.
  \end{align*}
  Thus, Theorem~\ref{th:single-jump} holds in the maximum scheme with
  $p\in[1,\infty)$ under the assumption that $p>\alpha$. In the random
  walk scheme, the result holds for all $p\in[1,\infty]$, assuming
  that $\alpha\in(0,1)$.
\end{example}

\begin{example}
  \label{exa_4_7}
  Let $X_1^{(d)} = \zeta e_{\eta_d}$, where $e_{\eta_d}$ is the
  $\eta_d$th basis vector and $\eta_d\in \{1,\dots,d\}$ is a random
  variable with distribution $\Prob{\eta_d = k} = p_{kd}$,
  $k=1,\dots,d$, and such that
  \begin{equation}
    \label{eq:one_con}
    \sum_{k = 1}^{d} p_{kd}^2 \to 0 \quad \text{as}\ d \to \infty.
  \end{equation}
  The latter holds, e.g., if $\max_{1\leq k\leq d}p_{kd}\to 0$, which
  is the case if $\eta_d$ equally likely takes all possible values.
  Furthermore, assume that $\zeta$ is a nonnegative random variable
  with regularly varying tail $\overline{F}$ of order $\alpha$. Let
  \begin{equation}
    \label{eq:a_n}
    a_n=\sup\{t:\overline{F}(t)>n^{-1}\}.
  \end{equation}
  
  Note that $\chi_1^{(d)}=\delta_\zeta$.
  If $f$ is a bounded continuous function supported by
  $(s,\infty)$, then
  \begin{align*}
    n\E&\Big[\exp\Big\{-\sum_{i=1}^{d}f(a_n^{-1}X_{1i}^{(d)})\Big\}
    \one_{\chi_1^{(d)}((a_ns,\infty))\geq 1}\Big]\\
       &=n\E\Big[e^{-f(a_n^{-1}\zeta)}\one_{\zeta\geq a_ns}\Big]
         \to \int e^{-\int f\d\mu}\one_{\mu((s,\infty))=1}\nu(\d\mu)
         \quad \text{as}\; n\to\infty,
  \end{align*}
  where $\nu$ is the pushforward of $\theta_\alpha$ under the map
  $x\mapsto\delta_x$.
  Since all involved counting measures are Dirac measures,
  \eqref{eq:nu} holds for all $p\in[1,\infty]$, and
  \eqref{eq:levy-integral} holds by the same argument as in Example
  \ref{ex:iid}. Furthermore, (B) holds, since by
  \eqref{eq:one_con}
  \begin{align*}
    n^2 \Prob{\| X_1^{(d)} \wedge X_2^{(d)} \|_{\infty} \geq \eps a_n}
      & = n^2 \sum_{k = 1}^{d} \big(\Prob{\zeta >\eps a_n}\big)^2
        \big(\Prob{\eta_d = k}\big)^2 \\
      & = n^2 \big(\Prob{\zeta >\eps a_n}\big)^2 \sum_{k = 1}^{d}
        p_{kd}^2
        \to 0\quad \text{as}\; n\to\infty.
  \end{align*}
  Finally, assuming that $\alpha<p$, \eqref{eq:con_last_iid} implies that
  \begin{displaymath}
    \frac{n}{a_n^p}
    \E\big(\|X_1^{(d)}\|_p^p\one_{\|X_1^{(d)}\|_p\leq sa_n}\big)
    =\frac{n}{a_n^p}
    \E\big(\zeta^p\one_{\zeta\leq sa_n}\big)
    \sim\frac{\alpha}{p-\alpha}s^p n\Prob{\zeta>sa_n},
  \end{displaymath}
  and
  \begin{align*}
    \lim_{s\downarrow 0}\limsup_{n\to\infty}\frac{n}{a_n^p}
    \E\big(\|X_1^{(d)}\|_p^p\one_{\|X_1^{(d)}\|_p\leq sa_n}\big) 
    &=\lim_{s\downarrow 0}\frac{\alpha}{p-\alpha}s^p
      \limsup_{n\to\infty}n\Prob{\zeta>sa_n}\\
    &=\lim_{s\downarrow 0}\frac{\alpha}{p-\alpha}s^{p-\alpha}=0.
  \end{align*}
  Condition \eqref{eq:b} holds if $\alpha\in(0,1)$.  Therefore, the
  statements of Theorem~\ref{th:single-jump} hold in the maximum
  scheme, for any $p\in[1,\infty]$ with $p>\alpha$, and in the random
  walk scheme if $\alpha<1$.
\end{example}

\begin{example}
  We alter the setting of Example~\ref{exa_4_7} by assuming that the
  value of a single nontrivial component of $X_1^{(d)}$ depends on its
  position.  Let
  $X_1^{(d)} = \zeta e_{\lfloor\zeta\rfloor} \one_{1\leq
    \lfloor\zeta\rfloor \leq d}$, where $\lfloor\zeta\rfloor$ is the
  integer part of a nonnegative random variable $\zeta$ with regularly
  varying tail of index $\alpha\in(0,p)$. Define $a_n$ by
  \eqref{eq:a_n}.  Assume that the sequence $\Prob{i\leq\zeta<i+1}$,
  $i\geq 1$, is non-increasing.  For any bounded continuous function
  $f$ with support in $(s,\infty)$, we have
  \begin{align*}
    n\E\bigg[\exp\Big\{-\sum_{i=1}^{d}
    &f(a_n^{-1}X_{1i}^{(d)})\Big\}
    \one_{\chi_1^{(d)}((a_ns,\infty))\geq 1}\bigg]\\
    &=n\E\bigg[e^{-f(a_n^{-1}\zeta)}
    \one_{a_n^{-1}\zeta>s}\one_{1\leq
      \zeta\leq d}\bigg]\\
    &\to \int e^{-\int f(u)d\mu}\one_{\mu((s,\infty))=1}\nu(\d\mu),
  \end{align*}
  if $\lim_{n\to\infty}a_n^{-1}d=\infty$. The latter fact holds if
  $n=o(d^{\alpha-\delta})$ for some $\delta>0$. 
  The tail measure $\nu$ is the pushforward of 
  $\theta_\alpha$ under the map $x\mapsto\delta_x$.  Note that
  \eqref{eq:6} and \eqref{eq:2} are satisfied by the same arguments as
  in Example~\ref{exa_4_7}, and so \eqref{eq:nu} holds with all
  $p\in[1,\infty]$, and
  \eqref{eq:levy-integral} holds by the same argument as in Example
  \ref{ex:iid}.

  Since the sequence $\Prob{i\leq\zeta<i+1}$, $i\geq 1$, is
  non-increasing,
  \begin{align*}
    n^2 
    &\Prob{\| X_1^{(d)} \wedge X_2^{(d)} \|_{\infty} >\eps a_n}
      = n^2 \Prob{\bigcup_{i = 1}^{d} 
      \{ X_{1i}^{(d)} \wedge X_{2i}^{(d)} >\eps a_n\}} \\
    &= n^2 \sum_{i = 1}^{d} 
      \big(\Prob{\zeta >\eps a_n, \lfloor\zeta\rfloor=i}\big)^2
      \leq n^2\sum_{i = \lfloor\eps a_n\rfloor}^{\infty}
      \big(\Prob{i\leq\zeta<i+1}\big)^2\\
    &\leq n^2\Prob{\lfloor\eps a_n\rfloor\leq\zeta<\lfloor\eps a_n\rfloor+1}
      \sum_{i = \lfloor\eps a_n\rfloor}^{\infty}\Prob{i\leq\zeta<i+1}\\
    &\leq n^2\Prob{\lfloor\eps a_n\rfloor\leq\zeta<\lfloor\eps
      a_n\rfloor+1}\Prob{\zeta\geq \lfloor\eps a_n\rfloor}.
  \end{align*}
  This expression converges to zero as $n\to\infty$, since
  $n\Prob{\zeta\geq \lfloor\eps a_n\rfloor}\to\eps^{-\alpha}$ and
  \begin{align*}
    \lim_{n\to\infty}n\Prob{\lfloor\eps a_n\rfloor\leq\zeta<\lfloor\eps
    a_n\rfloor+1}
    &=\lim_{n\to\infty}n\Big(\Prob{\zeta\geq \lfloor\eps
      a_n\rfloor}-\Prob{\zeta\geq \lfloor\eps a_n\rfloor+1}\Big)\\
    &=\eps^{-\alpha}-\eps^{-\alpha}=0.
  \end{align*}
  Thus, (B) holds.  Finally, (C) can be verified by following the same
  arguments as in Example~\ref{exa_4_7}.  Thus,
  Theorem~\ref{th:single-jump} holds under our assumptions with any
  $p\in[1,\infty]$ and $\alpha<p$, assuming that
  $n=o(d^{\alpha-\delta})$ for some $\delta>0$. In the random walk
  scheme we need to assume that $\alpha\in(0,1)$.
\end{example}

\begin{example}[Moving maxima]
  \label{ex:moving-max}
  Let $(Z_n)_{n\in\ZZ}$ be independent copies of a random variable $Z$
  such that its tail $\overline{F}(t)=\Prob{Z>t}$ is regularly varying
  at infinity of index $\alpha$.  Set $X_{1i}^{(d)}=Z_i\vee Z_{i-1}$
  for all $1\leq i\leq d$, that is, the components of $X_1^{(d)}$ are
  obtained as moving maxima applied to the i.i.d.\ sequence
  $(Z_n)_{n\in\ZZ}$.  Define $a_n$ by \eqref{eq_iid_a_n} with
  $\overline{F}$ corresponding to $Z$. Fix a $p\in[1,\infty]$ with
  $\alpha<p$. Since $\chi_1^{(d)}((a_ns,\infty))\geq 3$ requires at
  least two large values of $Z_i$'s, we have 
  \begin{align*}
    &n\E\Big[\exp\Big\{-\sum_{i=1}^{d}f(a_n^{-1}X_{1i}^{(d)})\Big\}
      \one_{\chi_1^{(d)}((a_ns,\infty))\geq
      3}\Big]\leq n\Prob{\one_{\chi_1^{(d)}((a_ns,\infty))\geq
      3}}\\
    &\leq nd^2\Prob{Z_1>a_ns}\to 0 \quad \text{as}\; n\to\infty.
  \end{align*}
  Since $\chi_1^{(d)}((a_ns,\infty))=1$ only if either $Z_0$ or $Z_d$
  exceeds $a_ns$, we have
  \begin{align*}
    n\E\Big[\exp\Big\{-\sum_{i=1}^{d}f
      \big(a_n^{-1}X_{1i}^{(d)}\big)\Big\}
      \one_{\chi_1^{(d)}((a_ns,\infty))=1}\Big]
    &\leq 2n\Prob{Z_0>a_ns,Z_1\leq a_ns,\ldots,Z_d\leq a_ns}\\
    &\leq 2n\Prob{Z>a_ns} \to 0 \quad \text{as}\; n\to\infty.
  \end{align*}
  Furthermore, $\chi_1^{(d)}((a_ns,\infty))=2$ if either $Z_0,Z_1$ or
  $Z_{d-1},Z_{d}$ exceed $a_ns$, or if $Z_i>a_ns$ for a single
  $i\in\{1,\dots,d-1\}$. Therefore,
  \begin{align*}
    &n\E\Big[\exp\Big\{-\sum_{i=1}^{d}
      f\big(a_n^{-1}X_{1i}^{(d)}\big)\Big\}
      \one_{\chi_1^{(d)}((a_ns,\infty))=2}\Big]\\
    &=2n\E\Big[e^{-f(a_n^{-1}(Z_1\vee Z_0))-f(a_n^{-1}(Z_2\vee Z_1))}
      \one_{Z_0>a_ns,Z_1>a_ns}
      \one_{Z_2\leq a_ns,\ldots,Z_d\leq a_ns}\Big]\\
    &\qquad +n(d-1)\E\Big[e^{-2f(a_n^{-1}Z)}
      \one_{Z>a_ns}\Big]\big(\Prob{Z\leq a_ns}\big)^{d}.
  \end{align*}
  The first term is bounded above by $2n\Prob{Z>a_ns}^2$, which
  converges to zero as $n\to\infty$, while the second term converges
  to $\int e^{-\int f\d\mu}\one_{\mu((s,\infty))=2}\nu(\d\mu)$, where
  $\nu$ is the pushforward of the measure $\theta_{\alpha}$ under the
  map $x\mapsto 2\delta_x$.  The tail measure $\nu$ is supported on
  counting measures of total mass two.
  
  We check \eqref{eq:nu} with $p\in[1,\infty)$ by applying
  Lemma~\ref{lemma:lp-vague}.  Condition \eqref{eq:6} can be shown as
  follows. Denote the $j$th order statistic of the components of
  $X_1^{(d)}$ by $(X_1^{(d)})_{(j)}$. Then,
  \begin{align*}
    &\Prob{\|\chi_1^{(d)}-J_k(\chi_1^{(d)})\|_p>sa_n}
      \leq \Prob{(d-k)(X_1^{(d)})_{(k+1)}^p>s^pa_n^p}\\
    &\hspace{2cm}\leq \Prob{(X_1^{(d)})_{(k+1)}>sa_n(d-k)^{-1/p}}\\
    &\hspace{2cm}\leq \binom{d}{k+1}\Prob{Z_1>sa_nd^{-1/p}}^{\lfloor\frac{k+2}{2}\rfloor}
    \leq d^{k+1}\overline{F}(sa_nd^{-1/p})^{\lfloor\frac{k+2}{2}\rfloor}.
  \end{align*}
  Therefore,
  \begin{displaymath}
    \lim_{n\to\infty}
    n\Prob{\|\chi_1^{(d)}-J_k(\chi_1^{(d)})\|_p>sa_n}\\
    \leq \lim_{n\to\infty}
    n d^{k+1}\overline{F}(sa_nd^{-1/p})^{\lfloor\frac{k+2}{2}\rfloor} =0
  \end{displaymath}
  for all sufficiently large $k$, which is the case if
  $d=\mathcal{o}(n^{p/(\alpha+p)})$.
  Furthermore, \eqref{eq:2} holds, since for all $s>0$,
  \begin{align*}
    n\Prob{\|X_1^{(d)}\|_p>sa_n}
    &=n\Prob{\|X_1^{(d)}\|_p>sa_n,
      \|X_1^{(d)}\|_{\infty}>sa_n}\\
    &\hspace{4cm}+n\Prob{\|X_1^{(d)}\|_p>sa_n,
      \|X_1^{(d)}\|_{\infty}\leq sa_n}\\
    &\leq n\Prob{\|X_1^{(d)}\|_{\infty}>sa_n}
      +\frac{n}{s^pa_n^p}\E\Big(\|X_1^{(d)}\|_p^p\one_{\|X_1^{(d)}\|_{\infty}\leq
      sa_n}\Big)\\
    &\leq n\Prob{\|X_1^{(d)}\|_{\infty}>sa_n}
      +\frac{nd}{s^pa_n^p}\E\Big(\big(Z_1^p\vee
      Z_2^p\big)\one_{Z_1\vee Z_2\leq sa_n}\Big)\\
    &\leq n\Prob{\|X_1^{(d)}\|_{\infty}>sa_n}
      +\frac{2nd}{s^pa_n^p}\E\big(Z_1^p\one_{Z_1\leq sa_n}\big)\\
    &\sim n\Prob{\|X_1^{(d)}\|_{\infty}>sa_n}
      +\frac{2\alpha}{p-\alpha}nd\Prob{Z_1>sa_n},
  \end{align*}
  where the last step is implied by \eqref{eq:con_last_iid}, and so
  \begin{align*}
    \lim_{n\to\infty}n\Prob{\|X_1^{(d)}\|_p>sa_n}
    \leq \lim_{n\to\infty}n\Prob{\|X_1^{(d)}\|_{\infty}>sa_n}
    &+\frac{2\alpha}{p-\alpha}\lim_{n\to\infty} nd\Prob{Z_1>sa_n}\\
    &=s^{-\alpha}+\frac{2\alpha}{p-\alpha}s^{-\alpha}<\infty.
  \end{align*}
  Condition \eqref{eq:nu-r} trivially holds, since $\nu$ is supported
  by measures of the total mass two. Thus, \eqref{eq:nu} holds with
  the given $p$ by Lemma~\ref{lemma:lp-vague}. Furthermore,
  \eqref{eq:levy-integral} holds, since
  \begin{displaymath}
    \int \min(1,\|\mu\|_p^p)\nu(\d \mu)
    =2\int_{0}^{1}x^p\alpha x^{-\alpha-1}\d x
    +2\int_{1}^{\infty}\alpha x^{-\alpha-1}\d x
    <\infty.
  \end{displaymath}
  Denote by $Z'_i$, $i=1,\dots,d$, independent copies of $Z_i$,
  $i=1,\dots,d$.  Condition (B) holds, since
  \begin{align*}
    n^2\Prob{\|X_1^{(d)}\wedge X_2^{(d)}\|_{\infty}>sa_n}
    &=n^2\Prob{\cup_{i=1}^{d}\{(Z_i\vee Z_{i-1})\wedge(Z_{i}'\vee
      Z_{i-1}')\geq sa_n\}}\\
    &\leq n^2d\Prob{(Z_2\vee Z_{1})\wedge(Z_{2}'\vee Z_{1}')\geq
      sa_n}\\
    &=n^2d\Prob{Z_2\vee Z_1\geq sa_n}^2
    =n^2d\Big(1-\big(1-\overline{F}(sa_n)\big)^2\Big)^2\\
    &\leq 4n^2d\overline{F}(sa_n)^2\to 0
      \quad \text{as}\; n\to\infty.
  \end{align*}
  Condition (C) follows from 
  \begin{align*}
    \frac{n}{a_n^p}
    \E\big(\|X_1^{(d)}\|_p^p\one_{\|X_1^{(d)}\|_p\leq sa_n}\big)
    &\leq \frac{n}{a_n^p}
    \E\big(\|X_1^{(d)}\|_p^p\one_{\|X_1^{(d)}\|_{\infty}\leq
    sa_n}\big)\\
    &\leq \frac{2nd}{a_n^p}\E\big(Z_1^p\one_{Z_1\leq sa_n}\big)
      \sim \frac{2\alpha}{p-\alpha}s^pnd\Prob{Z_1>sa_n},
  \end{align*}
  and 
  \begin{align*}
    \lim_{s\downarrow 0}\limsup_{n\to\infty}\frac{n}{a_n^p}
    \E\big(\|X_1^{(d)}\|_p^p\one_{\|X_1^{(d)}\|_p\leq sa_n}\big) 
    &\leq\lim_{s\downarrow 0}\frac{2\alpha}{p-\alpha}s^p
      \limsup_{n\to\infty}nd\Prob{Z_1>sa_n}\\
    &=\lim_{s\downarrow 0}\frac{2\alpha}{p-\alpha}s^{p-\alpha}=0.
  \end{align*}
  Thus, Theorem~\ref{th:single-jump} holds under our assumptions with
  any $p\in[1,\infty]$ and $\alpha<p$, assuming that
  $d=\mathcal{o}(n^{p/(\alpha+p)})$.  In the random
  walk setting one has to additionally assume that $\alpha\in(0,1)$.
\end{example}

\subsection{Vectors sampled from long-range dependent time
  series}
\label{sec:example:-long-range}

Assume that
\begin{equation}
  \label{eq:Kp}
  \Prob{X^{(d)}\leq u}=e^{-\|u^{-1}\|_r},\quad u\in\R_+^d,
\end{equation}
where $r\in[1,\infty]$ and $u^{-1}=(u_1^{-1},\dots,u_d^{-1})$.  This
distribution has been studied in \cite{MR2790117} and is known under
the name of a \emph{logistic distribution}. It is easy to see that
$X^{(d)}$ is max-stable, that is, the coordinatewise maximum of any
number of its independent copies is distributed as the suitably scaled
$X^{(d)}$. 

If $r=\infty$, then all components of $X^{(d)}$ are identical. The case
$r=1$ corresponds to random vectors with i.i.d.\ unit Fr\'echet
components.  In the following we assume that $r\in(1,\infty)$ and use
the following apparently new representation of such max-stable random
vectors. Let $\eta$ be a random vector in $\R^d$ with distribution
\begin{displaymath}
  \Prob{\eta\leq u}=e^{-\|u^{-1}\|_r^r},\quad u\in\R_+^d,
\end{displaymath}
which means that all its components $(\eta_1,\dots,\eta_d)$ are
i.i.d., sharing the Fr\'echet distribution of parameter
$r$. Furthermore, let $\zeta$ be a nonnegative random variable with
$1/r$-stable distribution and independent of $\eta$ (see
\cite[Section~1.4]{MR4230105}), so that its Laplace transform is
$\E e^{-t\zeta}=e^{-t^{1/r}}$, $t\geq0$.  Then
$X^{(d)}=\zeta^{1/r}\eta$ has the distribution given at \eqref{eq:Kp},
since
\begin{displaymath}
  \Prob{X^{(d)}\leq u} =\E\Big[\Prob{\zeta^{1/r}\eta\leq
    u\mid\zeta}\Big]
  =\E\Big[e^{-\zeta\|u^{-1}\|_r^r}\Big]=e^{-\|u^{-1}\|_r}. 
\end{displaymath}
The distributions given by \eqref{eq:Kp} build a consistent family and
so there exists a random sequence $(Z_n)_{n\in\ZZ}$ such that its
finite-dimensional distributions of order $d$ are given by
\eqref{eq:Kp} for any $d\in\NN$.  This time series $(Z_n)_{n\in\ZZ}$
can be obtained as the sequence of i.i.d.\ unit Fr\'echet random
variables, all scaled by $\zeta^{1/r}$. This already suggests that
this time series exhibits long-range dependence.

Let $a_n=nd^{1/r}$. Let $f:\R_+\to\R$ be a bounded continuous function
such that $f(t)=0$ for all $t\leq s$ for some $s>0$. The left-hand
side of Equation~\eqref{eq:dtw} is the sum over
$m\in\{1,\dots,d\}$ of 
\begin{align*}
  I_m^{(n,d)}(f)&=n\E
       \Big[\exp\Big\{-\sum_i f(a_n^{-1}X_{1i}^{(d)})\Big\}
       \one_{\chi_{1}^{(d)}((a_ns,\infty))=m}\Big]\\
     &=n\binom{d}{m}\E
       \Big[\exp\Big\{-\sum_{i=1}^m f(a_n^{-1}\zeta^{1/r}\eta_i)\Big\}
       \one_{\eta_i>a_ns\zeta^{-1/r}, i=1,\dots,m}
       \one_{\eta_i\leq a_ns\zeta^{-1/r}, i=m+1,\dots,d}\Big]\\
     &=n\binom{d}{m}
       \E\Big[\int_{\R_+^m}
       \exp\Big\{-\sum_{i=1}^m f(a_n^{-1}\zeta^{1/r}y_i)\Big\}
       \one_{a_n^{-1}\zeta^{1/r}y_i>s, i=1,\dots,m}\\
     & \qquad\qquad\qquad\qquad\qquad\qquad
       \times e^{-(a_ns)^{-r}\zeta(d-m)}
       \d(e^{-y_1^{-r}})\cdots  \d(e^{-y_m^{-r}}) \Big]\\
     &=n\binom{d}{m}
       \E\Big[\int_{\R_+^m}
       \exp\Big\{-\sum_{i=1}^m f(a_n^{-1}\zeta^{1/r}y_i)\Big\}
       \one_{a_n^{-1}\zeta^{1/r}y_i>s, i=1,\dots,m}
       r^{m}(y_1\cdots y_m)^{-r-1} \\
     & \qquad\qquad\qquad\qquad\qquad\qquad
       \times e^{-(y_1^{-r}+\cdots+y_m^{-r})}
       e^{-(a_ns)^{-r}\zeta(d-m)}\d y_1\cdots \d y_m \Big]\\
     &=n\binom{d}{m}
       \E\Big[\int_{(s,\infty)^m}
       e^{-\sum_{i=1}^m f(u_i)}
       r^{m}(u_1\cdots u_m)^{-r-1} a_n^{-rm} \\
     & \qquad\qquad\qquad\qquad\qquad\qquad
       \times e^{-\zeta a_n^{-r}
       (u_1^{-r}+\cdots+u_m^{-r})}\zeta^m 
       e^{-(a_ns)^{-r}\zeta(d-m)}\d u_1\cdots \d u_m \Big].
\end{align*}
It is convenient to let $I_m^{(n,d)}(f)=0$ when $m>d$. 
Let $h(v)=\E e^{-\zeta v}=e^{-v^{1/r}}$ be the Laplace transform of
$\zeta$. Since $\zeta^m e^{-v\zeta}$ is integrable for all $v>0$ and
$m\geq1$, 
\begin{displaymath}
  \E\big[\zeta^m e^{-\zeta v}\big] = (-1)^m h^{(m)}(v),\quad v>0,
\end{displaymath}
where $h^{(m)}$ denotes the $m$th derivative of $h$, so that
\begin{displaymath}
  \E\Big[\zeta^m e^{-\zeta a_n^{-r} (u_1^{-r}+\cdots+u_m^{-r})}
  e^{-\zeta(a_ns)^{-r}(d-m)}\Big]
  =(-1)^mh^{(m)}\Big(a_n^{-r}\big((u_1^{-r}+\cdots+u_m^{-r})+s^{-r}(d-m)\big)\Big). 
\end{displaymath}
We have
\begin{equation}
  \label{eq:h-summands}
  (-1)^m h^{(m)}(v)
  = (-1)^{m+1}(r^{-1})_{(m)} v^{r^{-1}-m} e^{-v^{1/r}}
  +\sum_{i=2}^m c_i v^{ir^{-1}-m} e^{-v^{1/r}},
\end{equation}
where $x_{(m)}=x(x-1)\cdots(x-m+1)$ denotes the $m$th falling
factorial and $c_2,\dots,c_m$ depend only on $r$.  Inserting into the
formula 
\begin{multline*}
  I_m^{(n,d)}(f)=n\binom{d}{m} \int_{(s,\infty)^m}
  e^{-\sum_{i=1}^m f(u_i)}
  r^{m}(u_1\cdots u_m)^{-r-1} a_n^{-rm}\\
  \times (-1)^m
  h^{(m)}(a_n^{-r}((u_1^{-r}+\cdots+u_m^{-r})+s^{-r}(d-m)))
  \d u_1\cdots \d u_m
\end{multline*}
only the first term from \eqref{eq:h-summands} yields the expression
\begin{align*}
  &n\binom{d}{m} a_n^{-rm}r^{m}(-1)^{m+1} (r^{-1})_{(m)}
    \int_{(s,\infty)^m}
    e^{-\sum_{i=1}^m f(u_i)}(u_1\cdots u_m)^{-r-1}\\
  &\hspace{2cm} \times  
    \big(a_n^{-r}((u_1^{-r}+\cdots+u_m^{-r})+s^{-r}(d-m))\big)^{r^{-1}-m}\\
  &\hspace{4cm} \times  e^{-\big(a_n^{-r}((u_1^{-r}+\cdots+u_m^{-r})+s^{-r}(d-m))\big)^{1/r}} \d u_1\cdots
    \d u_m\\
  &= n\binom{d}{m} a_n^{-rm}r^{m}a_n^{rm-1}(r^{-1})_{(m)} (-1)^{m+1}
    \int_{(s,\infty)^m}
    e^{-\sum_{i=1}^m f(u_i)}(u_1\cdots u_m)^{-r-1}\\
  &\hspace{2cm} \times  
    \big((u_1^{-r}+\cdots+u_m^{-r})+s^{-r}(d-m)\big)^{r^{-1}-m}\\
  &\hspace{4cm}\times e^{-a_n^{-1}((u_1^{-r}+\cdots+u_m^{-r})+s^{-r}(d-m))^{1/r}} \d u_1\cdots
    \d u_m\\
  &=\frac{r^m}{m!} (r^{-1})_{(m)} (-1)^{m+1}
    \int_{(s,\infty)^m}
    e^{-\sum_{i=1}^m f(u_i)}(u_1\cdots u_m)^{-r-1}\\
  &\hspace{2cm}\times \Big(\frac{(u_1^{-r}+\cdots+u_m^{-r})+s^{-r}(d-m)}{d}
  \Big)^{r^{-1}-m}\\
  &\hspace{4cm} \times
  \frac{d!}{d^m(d-m)!}
    e^{-a_n^{-1}((u_1^{-r}+\cdots+u_m^{-r})+s^{-r}(d-m))^{1/r}} \d u_1\cdots
    \d u_m.
\end{align*}
The integrand is bounded by $(u_1\cdots u_m)^{-r-1}s^{mr-1}2^{m-1/r}$
for all sufficiently large $d$ (namely for $d\geq 2m$), which is
integrable over $(s,\infty)^m$. Thus, the dominated convergence
theorem applies and yields that the right-hand side converges as
$n,d\to\infty$ to
\begin{multline*}
  s^{rm-1}\frac{1}{m!}(r^{-1})_{(m)} (-1)^{m+1}
  \int_{(s,\infty)^m}
  e^{-\sum_{i=1}^m f(u_i)} r^m(u_1\cdots u_m)^{-r-1}
  \d u_1\cdots \d u_m\\
  =s^{rm-1}\frac{1}{m!}(r^{-1})_{(m)} (-1)^{m+1}
  \Big( \int_{(s,\infty)}e^{-f(u)}\theta_r(\d u)\Big)^m. 
\end{multline*}
Arguing as above, it is easy to see that all other terms of $I_m^{(n,d)}(f)$
arising from inserting further summands from \eqref{eq:h-summands}
converge to zero. Thus,
\begin{equation}
  \label{eq:I-conv}
  \lim_{n\to\infty} I_m^{(n,d)}(f)
  =\frac{s^{rm-1}}{m!}(r^{-1})_{(m)} (-1)^{m+1}
  \Big( \int_{(s,\infty)}e^{-f(u)}\theta_r(\d u)\Big)^m.
\end{equation}
For each $m$, letting $f\equiv 0$ yields that 
\begin{align*}
  I_m^{(n,d)}(f)\leq J_m^{(n,d)}
  &=n\binom{d}{m}\E\Big[
    \int_{(s,\infty)^m}
    r^{m}(u_1\cdots u_m)^{-r-1} a_n^{-rm} \\
  & \qquad\qquad\qquad
    \times e^{-\zeta a_n^{-r}
    (u_1^{-r}+\cdots+u_m^{-r})}\zeta^m 
    e^{-(a_ns)^{-r}\zeta(d-m)}\d u_1\cdots \d u_m \\
  &=n\binom{d}{m}
    \E\Big[\Big(1-e^{-\zeta (a_ns)^{-r}}\Big)^m
    e^{-\zeta (a_ns)^{-r}(d-m)}\Big].
\end{align*}
By \eqref{eq:I-conv} with $f\equiv 0$
and noticing that $\theta_r((s,\infty))=s^{-r}$,
\begin{displaymath}
  \lim_{n\to\infty} J_m^{(n,d)}
  =\frac{s^{rm-1}}{m!}(r^{-1})_{(m)} (-1)^{m+1}s^{-rm}
  =\frac{s^{-1}}{m!}(r^{-1})_{(m)} (-1)^{m+1}.
\end{displaymath}
Furthermore,
\begin{displaymath}
  \sum_{m=1}^d J_m^{(n,d)}
  =n\E \big[1-e^{-\zeta (a_ns)^{-r}d}\big]
  =n\big[1-e^{-(a_ns)^{-1}d^{1/r}}\big]
  =n\big[1-e^{-n^{-1}s^{-1}}\big]\to s^{-1}\quad
  \text{as}\; n\to\infty. 
\end{displaymath}
This limit equals the sum of the limits of $J_m^{(n,d)}$, namely,
\begin{displaymath}
  \sum_{m=1}^\infty
  \frac{s^{-1}}{m!}(r^{-1})_{(m)} (-1)^{m+1}
  =s^{-1},
\end{displaymath}
where we have used the fact that
\begin{equation}
  \label{eq:sum-x}
  \sum_{m=1}^\infty \frac{(r^{-1})_{(m)}}{m!} x^m
  =(1+x)^{1/r} -1. 
\end{equation}
By Pratt's lemma and \eqref{eq:I-conv},
\begin{multline*}
  \lim_{n\to\infty} n\E
  \Big[\exp\Big\{-\sum_i f(a_n^{-1}X_{1i}^{(d)})\Big\}
  \one_{\chi_{1}^{(d)}((a_ns,\infty))\geq 1}\Big]
  =\sum_{m=1}^\infty \lim_{n\to\infty} I_m^{(n,d)}(f)\\
  =\sum_{m=1}^\infty \frac{s^{rm-1}}{m!}(r^{-1})_{(m)} (-1)^{m+1}
  \Big( \int_{(s,\infty)}e^{-f(u)}\theta_r(\d u)\Big)^m.
\end{multline*}
To find the sum on the right-hand side, we use \eqref{eq:sum-x} to
obtain that 
\begin{align*}
  (-1)
  &s^{-1}
    \sum_{m=1}^\infty \frac{(r^{-1})_{(m)}}{m!}
    \Big(-s^r\int_{(s,\infty)}e^{-f(u)}\theta_r(\d u)\Big)^m\\
  &=(-1)s^{-1}\bigg(\Big(1-s^r\int_{(s,\infty)}
    e^{-f(u)}\theta_r(\d u)\Big)^{1/r}-1\bigg)\\
  &=s^{-1}\bigg(1-\Big(1-s^r\int_{(s,\infty)}
    e^{-f(u)}\theta_r(\d u)\Big)^{1/r}\bigg)
    =s^{-1}-\Big(s^{-r}-\int_{(s,\infty)}
    e^{-f(u)} \theta_r(\d u)\Big)^{1/r}\\
  &=s^{-1}-\Big(\int_{(s,\infty)}
    (1-e^{-f(u)}) \theta_r(\d u)\Big)^{1/r}
    =s^{-1}-\Big(\int_{(0,\infty)}
    (1-e^{-f(u)}) \theta_r(\d u)\Big)^{1/r}.
\end{align*}
The obtained expression should be equal to the right-hand side of
\eqref{eq:dtw}, so that 
\begin{equation}
  \label{eq:s-1}
  s^{-1}-\int e^{-\int f\d\mu}\one_{\mu((s,\infty))\geq 1} \nu(\d\mu)
  =\Big(\int_{(0,\infty)}(1-e^{-f(u)}) \theta_r(\d u)\Big)^{1/r}.
\end{equation}
Letting $f$ vanish in \eqref{eq:dtw} yields that 
\begin{multline}
  \label{eq:logistic-infty}
  n\Prob{a_n^{-1}\|X_1^{(d)}\|_\infty>s}
  =n\big(1-e^{-(sa_n)^{-1} d^{1/r}}\big)
  =n\big(1-e^{-s^{-1}n^{-1}}\big)\\
  \to s^{-1}=\nu\big(\{\mu:\mu((s,\infty))\geq1\}\big)
  \quad \text{as}\; n\to\infty. 
\end{multline}
Thus, $\overline\nu_\infty=\theta_1$ and also 
\begin{displaymath}
  s^{-1}=\int  \one_{\mu((s,\infty))\geq 1} \nu(\d\mu).
\end{displaymath}
Inserting this in \eqref{eq:s-1} yields that
\begin{displaymath}
  \int \Big(1-e^{-\int f\d\mu}\Big)
  \one_{\mu((s,\infty))\geq 1} \nu(\d\mu)
  =\Big(\int_{(0,\infty)}(1-e^{-f(u)}) \theta_r(\d u)\Big)^{1/r},
\end{displaymath}
and then, taking into account that $f(u)=0$ for $u\in[0,s]$,
\begin{displaymath}
  \int \Big(1-e^{-\int f\d\mu}\Big)\nu(\d\mu)
  =\Big(\int_{(0,\infty)}(1-e^{-f(u)}) \theta_r(\d u)\Big)^{1/r},
\end{displaymath}
Letting $f=\sum t_i\one_{A_i}$ for pairwise
disjoint subsets $A_1,\dots,A_m$ of
$(s,\infty)$ and $t_1,\dots,t_m\geq 0$ yields that
\begin{equation}
  \label{eq:11}
  \int \Big(1-e^{-\sum t_i\mu(A_i)}\Big)\nu(\d\mu)
  =\Big(\sum (1-e^{-t_i})\theta_r(A_i)\Big)^{1/r}. 
\end{equation}
For $n_1,\dots,n_m\geq 0$, denote
\begin{displaymath}
  p_{n_1,\dots,n_m}=\nu\big(\big\{\mu:\mu(A_1)=n_1,\dots, \mu(A_m)=n_m\big\}\big).
\end{displaymath}
Then \eqref{eq:11} can be written as
\begin{displaymath}
  \sum_{n_1,\dots,n_m\geq 0} \Big(1-e^{-\sum t_in_i}\Big)
  p_{n_1,\dots,n_m}
  =\Big(\sum (1-e^{-t_i}) \theta_r(A_i)\Big)^{1/r},
\end{displaymath}
For $n_1=\cdots=n_m=0$, we interpret the corresponding term
in the sum as zero.  Letting $t_i\to\infty$ for all $i$ yields that
\begin{displaymath}
  C=\sum_{n_1,\dots,n_m\geq 0}^{\neq 0}
  p_{n_1,\dots,n_m}=\Big(\sum_{j=1}^m \theta_r(A_j)\Big)^{1/r},
\end{displaymath}
where the upper subscript of the sum means that the summation
excludes the case of all vanishing $n_i$.  Hence,
\begin{displaymath}
  \sum_{n_1,\dots,n_m\geq 0}^{\neq 0} \Big(1-e^{-\sum t_in_i}\Big)
  \frac{p_{n_1,\dots,n_m}}{C}
  =\Big(\sum (1-e^{-t_i})q_i\Big)^{1/r},
\end{displaymath}
where
\begin{displaymath}
  q_i=\frac{\theta_r(A_i)}{\sum_{j=1}^m \theta_r(A_j)}, \quad
  i=1,\dots,m.
\end{displaymath}
Therefore, denoting $z_i=e^{-t_i}$, we have 
\begin{displaymath}
  \sum_{n_1,\dots,n_m\geq 0}^{\neq 0} z_1^{n_1}\cdots z_m^{n_m}
  C^{-1}p_{n_1,\dots,n_m}
  =1-\Big(\sum (1-z_i)q_i\Big)^{1/r} 
\end{displaymath}
for all $z_i\in[0,1]$, $i=1,\dots,m$. Recognising the right-hand
side as the probability generating function, we see that
$C^{-1}p_{n_1,\dots,n_m}$ is the \emph{multivariate Sibuya distribution}
$\pi_{n_1,\dots,n_m}$, which is determined by $q_1,\dots,q_m$ and
$1/r$, see \cite[Eq.~(41)]{MR2817615}. In particular,
\begin{displaymath}
  p_n=\nu\big(\{\mu:\mu(A)=n\}\big)=\pi_n\theta_r(A)^{1/r},
\end{displaymath}
where $\pi_n$ is the probability that the standard Sibuya random
variable (with parameter $1/r$) takes value $n$. 

We also need to show that the introduced masses $p_{n_1,\dots,n_m}$
are consistent, that is, for disjoint $A_1,\dots,A_m,A_{m+1}$ we
have that
\begin{equation}
  \label{eq:p-consistency}
  p_{n_1,\dots,n_m}=\sum_{k=0}^\infty p_{n_1,\dots,n_m,k},\quad
  \max(n_1,\ldots,n_m)\geq 1. 
\end{equation}
We confirm this by checking that the probability generating
functions of both sides are equal. First, for
$z_1,\dots,z_m\in[0,1]$, we have 
\begin{eqnarray*}
  \sum^{\neq 0}_{n_1,\dots,n_m} p_{n_1,\dots,n_m}
  z_1^{n_1}\cdots z_m^{n_m}
  &=\Big(\sum_{i=1}^m \theta_r(A_i)\Big)^{1/r}
    \sum^{\neq 0}_{n_1,\dots,n_m} \pi_{n_1,\dots,n_m}
    z_1^{n_1}\cdots z_m^{n_m} \\
  &=\Big(\sum_{i=1}^m \theta_r(A_i)\Big)^{1/r}
    -\Big(\sum_{i=1}^m (1-z_i)\theta_r(A_i)\Big)^{1/r}.
\end{eqnarray*}
Then note that the choice of $z_1=\cdots=z_m=0$ and
$z_{m+1}=1$ yields that 
\begin{align*}
  \sum_{k=1}^\infty \pi_{0,\dots,0,k}
  &=\sum_{n_1,\dots,n_m\geq 0}\sum_{k=1}^\infty
    \pi_{n_1,\dots,n_m,k} z_1^{n_1}\cdots z_m^{n_m} z_{m+1}^k\\
  &=1-\bigg(\sum_{i=1}^{m+1}(1-z_i)\frac{\theta_r(A_i)}
    {\sum_{i=1}^{m+1} \theta_r(A_i)}\bigg)^{1/r}
    =1-\bigg(\sum_{i=1}^m \frac{\theta_r(A_i)}
    {\sum_{i=1}^{m+1} \theta_r(A_i)}\bigg)^{1/r}.
\end{align*}
Furthermore,
\begin{displaymath}
  \sum^{\neq 0}_{n_1,\dots,n_m} \sum_{k=0}^\infty
  \pi_{n_1,\dots,n_m,k }    z_1^{n_1}\cdots z_m^{n_m} \\
  =\sum^{\neq 0}_{n_1,\dots,n_m,k\geq0}
  \pi_{n_1,\dots,n_m,k }    z_1^{n_1}\cdots z_m^{n_m}  
  -\sum_{k=1}^\infty \pi_{0,\dots,0,k}.
\end{displaymath}
Thus,
\begin{align*}
  &\sum^{\neq 0}_{n_1,\dots,n_m} \sum_{k=0}^\infty
    p_{n_1,\dots,n_m,k }    z_1^{n_1}\cdots z_m^{n_m} 
    = \Big(\sum_{i=1}^{m+1} \theta_r(A_i)\Big)^{1/r}
    \sum^{\neq 0}_{n_1,\dots,n_m} \sum_{k=0}^\infty
    \pi_{n_1,\dots,n_m,k }    z_1^{n_1}\cdots z_m^{n_m} \\
  &\quad =\Big(\sum_{i=1}^{m+1} \theta_r(A_i)\Big)^{1/r}
    \bigg[1-\bigg(\sum_{i=1}^{m}(1-z_i)\frac{\theta_r(A_i)}
    {\sum_{i=1}^{m+1}\theta_r(A_i)}\bigg)^{1/r}-1
    +\bigg(\sum_{i=1}^m \frac{\theta_r(A_i)}
    {\sum_{i=1}^{m+1} \theta_r(A_i)}\bigg)^{1/r}\bigg]\\
  &\quad =\Big(\sum_{i=1}^m \theta_r(A_i)\Big)^{1/r}
    -\Big(\sum_{i=1}^m (1-z_i)\theta_r(A_i)\Big)^{1/r}, 
\end{align*}
which is the same as the generating function of
$p_{n_1,\dots,n_m}$. Thus, \eqref{eq:p-consistency} holds.

The tail measure $\nu$ is concentrated on counting measures with
infinitely many atoms. Indeed, for each $m\in\NN$, $x>0$ and
$\eps\in(0,x)$, we have
\begin{displaymath}
  \nu\big(\{\mu:\mu((\eps,x])=0,\mu((x,\infty))=m\}\big)
  =(1/r)_{(m)} \frac{\eps^{-1}}{m!} \frac{x^{-rm}}{\eps^{-rm}}.
\end{displaymath}
Letting $\eps\downarrow 0$ shows that
\begin{displaymath}
  \nu\big(\{\mu:\mu((0,x])=0,\mu((x,\infty))=m\}\big)=0.
\end{displaymath}
Finally,
\begin{displaymath}
  \nu\big(\{\mu:\mu((0,\infty))=m\}\big)
  \leq \sum_{n=1}^\infty
  \nu\big(\{\mu:\mu((0,n^{-1}])=0,\mu((n^{-1},\infty))=m\}\big)=0.
\end{displaymath}
Under the normalised cluster law induced by the tail measure $\nu$,
clusters contain infinitely many points and 
follow the multivariate Sibuya distribution.

If $r\in[1,2)$, then (B) holds. Indeed, if $X_{2i}^{(d)}$
are independent copies of $X_{1i}^{(d)}$, $1\leq i\leq d$, then
\begin{multline*}
  n^2\Prob{\|X_1^{(d)}\wedge X_2^{(d)}\|_{\infty}> sa_n}
  \leq n^2d\Prob{X_{11}^{(d)}\wedge X_{21}^{(d)}> sa_n}\\
  =n^2d\big(1-e^{-(sa_n)^{-1}}\big)^2
    =n^2d(sa_n)^{-2}+o(1)=d^{1-2/r}+o(1),
\end{multline*}
which converges to zero as $d\to\infty$. For $r\geq 2$, condition
(B) fails. Indeed, representing two independent vectors
as $\zeta_1^{1/r}\eta'$ and $\zeta_2^{1/r}\eta''$, we have
\begin{align*}
  n^2
  &\Prob{\|\zeta_1^{1/r}\eta'\wedge \zeta_2^{1/r}\eta''\|_{\infty} > sa_n}
    \geq n^2\P\big\{(dn^{2r-2})^{1/(2r-2)}\|\eta'\wedge
    \eta''\|_{\infty} > snd^{1/r},\\
  &\hspace{5cm}
    \zeta_1>(dn^{2r-2})^{r/(2r-2)},\zeta_2>(dn^{2r-2})^{r/(2r-2)}\big\}\\
  &= \Prob{\|\eta'\wedge \eta''\|_{\infty} > sd^{1/r-1/(2r-2)}}
    n^2\Prob{\zeta_1>(dn^{2r-2})^{r/(2r-2)}}^2\\
  &=\bigg(1-\Big(1-\big(1-e^{-s^{-r}(d^{1/r-1/(2r-2)})^{-r}}\big)^2\Big)^d\bigg)
    n^2\Prob{\zeta_1>(dn^{2r-2})^{r/(2r-2)}}^2\\
  &\sim d(d^{-1}d^{r/(2r-2)}s^{-r})^2n^2d^{-2/(2r-2)}n^{-2}\to
    s^{-2r}
    \quad\text{as}\; n\to\infty,
\end{align*}
where we used the tail of $\zeta$ given in
\cite[Section~2.5]{MR1745764}. 
Thus, Theorem~\ref{th:single-jump} holds in the maximum scheme
with the $\ell_\infty$ metric and assuming that $r\in[1,2)$.

It should be noted that (A) does not hold for any $p\in[1,\infty)$. By
Lemma~\ref{lemma:infty-convergence}, if \eqref{eq:nu} holds with such
$p$, then \eqref{eq:logistic-infty} holds, so that
\eqref{eq:p-norm-convergence} holds with the same normalising factors
$a_n$. Then, by \cite{MR240854}, for any $\beta>0$,
\begin{align*}
  n\Prob{\|X_1^{(d)}\|_p> a_ns}
  &\geq n\Prob{\zeta^{1/r}\|\eta\|_p> a_ns,\zeta>n^\beta}\\
  &\geq n\Prob{\zeta>n^\beta} \Prob{\|\eta\|_p^p\geq a_n^ps^p
    n^{-p\beta/r}}\\
  &\sim n\Prob{\zeta>n^\beta} d \Prob{\eta_1\geq a_nsn^{-\beta/r}}\\
  &\sim nn^{-\beta/r} d (a_ns n^{-\beta/r})^{-r}
    =n^{1-\beta/r}d s^{-r}n^{-r}n^{\beta}d^{-1}\\
  &=n^{1+\beta-r-\beta/r}s^{-r}.
\end{align*}
It suffices to choose $\beta>r$ to ensure that the right-hand side
converges to infinity and so the necessary condition
\eqref{eq:p-norm-convergence} fails.

\appendix

\setcounter{equation}{0}
\setcounter{theorem}{0}
\renewcommand{\thesection}{I}
\section*{Appendix \thesection: Permutation invariant metric on $\ell_p$}
\label{sec:a:-perm-invar}

Let $\Pi$ be the family of finite permutations $\pi:\N\to\N$. For
$\pi\in\Pi$ and $x=(x_1,x_2,\ldots)\in\ell_p$, denote
$\pi(x)=(x_{\pi(1)}, x_{\pi(2)},\ldots)$. Furthermore, define
\begin{equation}
  \label{eq:rhoPi}
  \rho_{\Pi,p}(x,y)=\inf_{\pi\in\Pi}\|\pi(x)-y\|_p,\quad x,y\in\ell_p,
\end{equation}
which is a permutation-invariant pseudometric on $\ell_p$. It becomes
a metric on the quotient space obtained by identifying sequences
differing by a finite permutation. 

\begin{lemma}
  \label{le:best_matching_k}
  For $x,y\in\ell_p$ with $p\in[1,\infty)$, 
  \begin{displaymath}
    \rho_{\Pi,p}(x,y)
    =\lim_{k\to\infty}\|\tilde{x}_{1:k}-\tilde{y}_{1:k}\|_p
    =\dm_p(\me(x),\me(y)),
  \end{displaymath}
  where $\tilde{x}_{1:k}$ and $\tilde{y}_{1:k}$ are, respectively,
  the finite sequences $(x_1,\dots,x_k)$ and $(y_1,\dots,y_k)$
  rearranged in nondecreasing order. %  and $\me^{(s)}$ is the measure
                                %  $\me(x)$ restricted to 
  % $\{t\in\R:|t|>s\}$.
\end{lemma}
\begin{proof}
  Note that $\Pi=\cup_{k\in\NN}\Pi_k$, where $\Pi_k$ consists of
  permutations $\pi$ such that $\pi(i)=i$ for all $i>k$. Then
  \begin{displaymath}
    \rho_{\Pi,p}(x,y)=\lim_{k\to\infty}\inf_{\pi\in\Pi_k}\|\pi(x)-y\|_p.
  \end{displaymath}
  Since $\Pi_k\uparrow\Pi$, the sequence of infima is decreasing.  Let
  $x_{1:k}=(x_1,\ldots,x_k)$ and
  $x_{k+1:\infty}=(x_{k+1},x_{k+2},\ldots)$; similarly, define $y_{1:k}$
  and $y_{k+1:\infty}$. For $\pi\in\Pi_k$,
  \begin{displaymath}
    \|\pi(x_{1:k})-y_{1:k}\|_p\leq\|\pi(x)-y\|_p
    \leq \|\pi(x_{1:k})-y_{1:k}\|_p+\|x_{k+1:\infty}-y_{k+1:\infty}\|_p.
  \end{displaymath}
  By Example~2 of \cite{MR1048445} and Section~5.4 of \cite{MR368308},
  \begin{displaymath}
    \inf_{\pi\in\Pi_k}\|\pi(x_{1:k})-y_{1:k}\|_p
    =\|\tilde{x}_{1:k}-\tilde{y}_{1:k}\|_p.
  \end{displaymath}
  Therefore, 
  \begin{align*}
    \|\tilde{x}_{1:k}-\tilde{y}_{1:k}\|_p
    &\leq \inf_{\pi\in\Pi_k}\|\pi(x)-y\|_p
      \leq\inf_{\pi\in\Pi_k}\|\pi(x_{1:k})-y_{1:k}\|_p
      +\|x_{k+1:\infty}-y_{k+1:\infty}\|_p\\
    &=\|\tilde{x}_{1:k}-\tilde{y}_{1:k}\|_p
      +\|x_{k+1:\infty}-y_{k+1:\infty}\|_p.
  \end{align*}
  Letting $k\to\infty$ and noticing that
  $\|x_{k+1:\infty}-y_{k+1:\infty}\|_p\to 0$ confirms
  the claim. The last equality follows from the definition of $\dm_p$
  via optimal matching of atoms.
\end{proof}

\setcounter{theorem}{0}
\setcounter{equation}{0}
\renewcommand{\thesection}{II}
\section*{Appendix \thesection: Isometries of $\ell_p$}
\label{sec:isometries-ell_p}

The following result is a variant of \cite[Theorem~7.4.1]{MR4249569}
formulated for the space of sequences. It appears in the proof
therein. Similar statements can be found in Example~5.2.3 and
Theorem~5.2.12 from \cite{MR1957004}.

\begin{theorem}
  \label{th:isometries}
  Let $p\in[1,\infty]\setminus\{2\}$. A map $U:\ell_p\to\ell_p$ with
  $U(0)=0$ is an isometry, that is,
  $\|U(x)-U(y)\|_p=\|x-y\|_p$ for all $x,y\in\ell_p$, if and only if
  $U$ is linear and for every $k\ge1$ there exist pairwise disjoint
  subsets $G_k=\{n_{k,1},n_{k,2},\ldots\}$ of $\N$ and constants
  $h_{k,1}, h_{k,2},\ldots$, satisfying
  \begin{displaymath}
    \sum_{n_{k,i}\in G_k}|h_{k,i}|^p=1\ \text{for $p\in[1,\infty)$,}\quad\quad
    \sup_{n_{k,i}\in G_k}|h_{k,i}|=1\ \text{for $p=\infty$,}
  \end{displaymath}
  such that
  \begin{displaymath}
    U(e_k)=\sum_{n_{k,i}\in G_k}h_{k,i}e_{n_{k,i}}.
  \end{displaymath}
\end{theorem}

\setcounter{equation}{0}
\setcounter{theorem}{0}
\renewcommand{\thesection}{III}
\section*{Appendix \thesection: Vague convergence of measures on $\bN$ with the $\dm_p$
  metric}
\label{sec:vague-conv-meas}

Below we present several results concerning vague convergence
on the space of counting measures equipped with the $\dm_p$ metric.
The case of $p=\infty$ relies on considering the ideal $\sS_\infty$,
which was used in \cite{dtw22}. It is easy to show that the weighted
Prokhorov metric used therein defines the same topology as the metric
$\dm_\infty$.  The following result is a reformulation of
\cite[Theorem~2.5]{dtw22} adapted to our notation, see also
\cite[Lemma~4.56]{bmm25}.

\begin{lemma}
  \label{lemma:dtw}
  Condition \eqref{eq:nu} with $p=\infty$ holds if and only if for all
  $s>0$,
  \begin{multline}
    \label{eq:dtw}
    n \E \Big[e^{-\int f\d(a_n^{-1}\chi^{(d)})}
    \one_{\chi^{(d)}((-\infty,-a_ns)\cup (a_ns,\infty))\geq 1}\Big]\\
    \to \int e^{-\int f \d\mu}
    \one_{\mu((-\infty,-s)\cup(s,\infty))\geq 1}\nu(\!\d\mu)
    \quad \text{as}\; n\to\infty
  \end{multline}
  for each bounded Lipschitz function $f:\R\to\R_+$ supported on
  $(-\infty,-s)\cup (s,\infty)$.
\end{lemma}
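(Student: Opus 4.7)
The plan is to reduce the lemma to \cite[Theorem~2.5]{dtw22} via the equivalence of two topologies on $\bN$.

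First, I would verify the claim made immediately above the lemma, namely that the weighted Prokhorov metric used in \cite{dtw22} generates the same topology on $\bN$ as $\dm_\infty$. The preceding discussion already establishes that $\dm_\infty(\mu_n,\mu)\to0$ if and only if $\mu_n\vto\mu$, and the weighted Prokhorov metric likewise metrizes the vague topology on $\bN$, so the equivalence reduces to matching definitions. This equivalence of topologies in turn implies that the vague convergence $\nu_n\vstop[\infty]\nu$ defined by \eqref{eq:vsto} coincides with the notion used in \cite{dtw22}: both the class of bounded continuous test functions and the ideal $\sS_\infty$ of sets of counting measures with $\|\mu\|_\infty$ bounded below agree in the two formulations.

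Second, I would apply \cite[Theorem~2.5]{dtw22} to the sequence $\nu_n(\cdot)=n\Prob{a_n^{-1}\chi^{(d)}\in\cdot}$. That theorem characterizes vague convergence on $\sS_\infty$ through convergence of localized Laplace functionals of exactly the type appearing in \eqref{eq:dtw}: the indicator $\one_{\mu((-\infty,-s)\cup(s,\infty))\geq 1}$ restricts attention to measures in the ideal $\sS_\infty$, while the factor $e^{-\int f\d\mu}$ runs through Laplace transforms that separate measures in the relevant class. For the forward implication, one plugs $g_{s,f}(\mu)=e^{-\int f\d\mu}\one_{\|\mu\|_\infty>s}$ into \eqref{eq:vsto}; for the converse, one appeals to the fact that this family of functionals is rich enough to characterize vague convergence on $\sS_\infty$, via the standard Stone--Weierstrass/monotone-class argument encapsulated in \cite[Theorem~2.5]{dtw22}.

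The one step requiring care, and the likely main obstacle, is the jump discontinuity of the indicator $\one_{\{\|\mu\|_\infty>s\}}$ at the boundary $\|\mu\|_\infty=s$. Since $f$ is supported in $(-\infty,-s)\cup(s,\infty)$ and is continuous, the integrand $g_{s,f}$ vanishes on $\{\|\mu\|_\infty\leq s\}$, and a Portmanteau-type sandwich is available: one approximates the indicator by $\dm_\infty$-Lipschitz functions from above and below, applies the convergence to the continuous approximants, and squeezes, exploiting that the product $g_{s,f}$ itself is zero on the boundary so no condition on $\nu\{\|\mu\|_\infty=s\}$ is needed. This is exactly the step performed in \cite[Theorem~2.5]{dtw22}, and constitutes the main technical content beyond the translation of notation.
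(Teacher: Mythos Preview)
Your approach is exactly what the paper does: it gives no proof beyond the sentence preceding the lemma, stating that the result is a reformulation of \cite[Theorem~2.5]{dtw22} once one identifies the $\dm_\infty$ topology with the weighted Prokhorov topology used there. So on the level of strategy you are aligned with the paper.

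One caution about your sketch of the technical step: the claim that ``the product $g_{s,f}$ itself is zero on the boundary so no condition on $\nu\{\|\mu\|_\infty=s\}$ is needed'' is not right. On $\{\|\mu\|_\infty=s\}$ the measure $\mu$ is supported in $[-s,s]$, so $\int f\,\d\mu=0$ and $e^{-\int f\,\d\mu}=1$, not $0$; the sandwich you describe therefore leaves a gap of exactly $\nu\{\|\mu\|_\infty=s\}$. The discontinuity of $g_{s,f}$ at the boundary is genuine, and the Portmanteau squeeze does not close by itself. Since you ultimately defer to \cite[Theorem~2.5]{dtw22} for this step, the overall reduction still goes through, but your heuristic for why the boundary is harmless should be dropped.
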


\begin{lemma}
  \label{lemma:norm}
  If \eqref{eq:nu} holds, then the measure
  $n\Prob{a_n^{-1}\|\chi^{(d)}\|_p\in \cdot}$ vaguely converges to
  $\overline{\nu}_p$, which is the pushforward of $\nu$ under the map
  $\mu\mapsto\|\mu\|_p$. 
\end{lemma}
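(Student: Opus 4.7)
The plan is to view the statement as an instance of the continuous mapping theorem for the vague convergence on the ideal $\sS_p$ supplied by \eqref{eq:nu}. The key observation is that the map $\Phi\colon\bN\to\R_+$ defined by $\Phi(\mu)=\|\mu\|_p=\dm_p(\mu,0)$ is $1$-Lipschitz in the $\dm_p$-metric (this is just the reverse triangle inequality for $\dm_p$) and satisfies the scaling identity $\Phi(t\mu)=t\,\Phi(\mu)$ for every $t>0$, since multiplying all atoms by $t$ multiplies their ordered absolute values by $t$. In particular, $\Phi(a_n^{-1}\chi^{(d)})=a_n^{-1}\|\chi^{(d)}\|_p$, so that the distribution of $a_n^{-1}\|\chi^{(d)}\|_p$ is the pushforward of the distribution of $a_n^{-1}\chi^{(d)}$ under $\Phi$.

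Now fix any continuous bounded $h\colon\R_+\to\R$ whose support is contained in $(s,\infty)$ for some $s>0$, and set $g=h\circ\Phi\colon\bN\to\R$. Then $g$ is bounded and continuous in the $\dm_p$-metric (as the composition of the $1$-Lipschitz map $\Phi$ with the continuous $h$), and its support is contained in $\{\mu\in\bN:\|\mu\|_p>s\}$, which belongs to $\sS_p$. Hence $g$ is an admissible test function in the sense of \eqref{eq:vsto}, and applying \eqref{eq:nu} to $g$ gives
\begin{displaymath}
  n\E\big[h\big(a_n^{-1}\|\chi^{(d)}\|_p\big)\big]
  =n\E\big[g\big(a_n^{-1}\chi^{(d)}\big)\big]
  \longrightarrow\int g\,\d\nu
  =\int h\big(\|\mu\|_p\big)\,\nu(\!\d\mu)
  =\int h\,\d\overline{\nu}_p,
\end{displaymath}
where $\overline{\nu}_p$ denotes the pushforward $\nu\circ\Phi^{-1}$ of $\nu$ under $\Phi$. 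This pushforward is consistent with the notation in \eqref{eq:ov-nu} because $\overline{\nu}_p((s,\infty))=\nu(\Phi^{-1}(s,\infty))=\nu(\{\mu:\|\mu\|_p>s\})$, and it is finite on every set of the form $(s,\infty)$, $s>0$, since $\nu$ is finite on $\sS_p$ under \eqref{eq:nu}. Since $h$ was an arbitrary continuous bounded function supported away from $0$, this yields the desired vague convergence $n\,\P\{a_n^{-1}\|\chi^{(d)}\|_p\in\cdot\}\vto\overline{\nu}_p$ on $(0,\infty)$.

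There is no serious obstacle here: the whole content of the lemma is that the norm $\|\cdot\|_p$ is compatible with the $\dm_p$-topology on $\bN$ in the precise sense that its preimages of sets bounded away from $0$ in $\R_+$ lie in the ideal $\sS_p$, so that the definition of vague convergence on $\sS_p$ transfers directly through the continuous mapping $\Phi$. The only point requiring care is making sure that the Lipschitz continuity of $\Phi$ is with respect to the same metric $\dm_p$ that defines the topology used in \eqref{eq:vsto}, which is immediate from $\|\mu\|_p=\dm_p(\mu,0)$.
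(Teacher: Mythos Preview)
Your proof is correct and follows essentially the same approach as the paper: the paper simply invokes the continuous mapping theorem for $f(\mu)=\|\mu\|_p$ together with the observation that $\{\mu:f(\mu)>s\}\in\sS_p$ for all $s>0$, and you have spelled out precisely those details (Lipschitz continuity of $\Phi$ in $\dm_p$, the support condition placing $g=h\circ\Phi$ in the admissible class for \eqref{eq:vsto}, and the resulting convergence of integrals).
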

\begin{proof}
  The result follows from the continuous mapping theorem, noticing the
  continuity of the function $f(\mu)=\|\mu\|_p$ and that
  $\{\mu:f(\mu)>s\}\in\sS_p$ for all $s>0$. 
\end{proof}

\begin{lemma}
  \label{lemma:pp}
  Condition \eqref{eq:nu} holds if and only if the point process
  \begin{equation}
    \label{eq:pp-chi-n}
    \eta_n=a_n^{-1}\sum_{i=1}^n \delta_{\chi_i^{(d)}}
  \end{equation}
  on the space $\bN$ composed of $n$ scaled independent copies of
  $\chi^{(d)}$ converges in distribution (in the space $\bN$ with
  metric $\dm_p$) to the Poisson cluster process with the cluster
  distribution given by $\nu$.
\end{lemma}
\begin{proof}
  Let $g$ be a $\dm_p$-continuous bounded function on $\bN$ with
  support in $\sS_p$. The Laplace functional of the point process
  $\eta_n$ from \eqref{eq:pp-chi-n} is given by
  \begin{align*}
    L_n[g]&=\E \exp\Big\{-\sum_{i=1}^n g(a_n^{-1}\chi_i^{(d)})\Big\}\\
    &=\bigg(\E \exp\Big\{-g(a_n^{-1}\chi_i^{(d)})\Big\}\bigg)^n
    =\Big(1-\E h(a_n^{-1}\chi^{(d)})\Big)^n,
  \end{align*}
  where $h(\mu)=1-e^{-g(\mu)}$, $\mu\in\bN$.  Thus, $L_n[g]$ converges
  if and only if there exists the limit of
  $n\E h(a_n^{-1}\chi^{(d)})$. Since $h$ is $\dm_p$-continuous and
  bounded with support in $\sS_p$, we obtain the desired equivalence.
\end{proof}

The following results make it possible to deduce the vague convergence
on $\sS_p$ for $p\in[1,\infty)$ from the convergence on
$\sS_\infty$. Since these results are useful for their own sake, they
are formulated for a general sequence $(\nu_n)_{n\in\NN}$ of measures
on $(\bN,\sN)$. While checking \eqref{eq:nu}, we set $n^{-1}\nu_n$ to
be the distribution of $a_n^{-1}\chi^{(d)}$.

\begin{lemma}
  \label{lemma:lp-vague-total}
  Assume that $\nu$ and $\nu_n$, $n\geq1$, are measures of $(\bN,\sN)$
  which are finite on $\sS_p$ for some $p\in[1,\infty)$. Then
  $\nu_n\vstop \nu$ for $p\in[1,\infty)$ if $\nu_n\vstop[\infty]\nu$
  as $n\to\infty$  and
  \begin{equation}
    \label{eq:p-norm-convergence}
    \lim_{n\to\infty}\nu_n\big(\{\mu:\|\mu\|_p>s\}\big)
    =\nu\big(\{\mu:\|\mu\|_p>s\}\big)<\infty 
  \end{equation}
  for all $s>0$, such that $\nu\big(\{\mu:\|\mu\|_p=s\}\big)=0$.
\end{lemma}
\begin{proof}
  % \textsl{Necessity.} (i) follows from $\sS_\infty\subset \sS_p$;
  % (ii) follows since the function $f(\mu)=\one_{\|\mu\|_p>s}$ has
  % support in $\sS_p$. 
  % \textsl{Sufficiency.}
  Define $A_{s,p}=\{\mu\in\bN:\|\mu\|_p>s\}$,
  $s>0$ and note that the boundary of this set in $\dm_p$ is
  $\{\mu\in\bN:\|\mu\|_p=s\}$, since $\mu\mapsto\|\mu\|_p$ is
  continuous. Let $(s_m)_{m\in\NN}$ be a decreasing 
  sequence of positive real numbers such that
  $\nu(\partial A_{s_m,p})=0$ for all $m$. For each $\delta>0$, the
  set $A_{s_m,p}\cap A_{\delta,\infty}$ belongs to $\sS_\infty$ and so
  the restriction of $\nu_n$ to this set converges to the restriction
  of $\nu$ for all but at most a countable number of $\delta>0$. By
  \eqref{eq:p-norm-convergence}, we have that
  $\nu_n(A_{s_m,p})\to \nu(A_{s_m,p})$. Choose $\delta>0$ such that
  $\nu(A_{s_m,p}\setminus A_{\delta,\infty})\leq \eps$. By
  \eqref{eq:p-norm-convergence}, we have also
  \begin{displaymath}
    \nu_n(A_{s_m,p}\setminus A_{\delta,\infty})
    =\nu_n(A_{s_m,p})
    -\nu_n(A_{s_m,p}\cap A_{\delta,\infty})\leq 2\eps
  \end{displaymath}
  for all sufficiently large $n$. If $f$ is a continuous function
  supported on $A_{s_m,p}$ and bounded by $C$, then
  \begin{multline*}
    \Big|\int f\d\nu_n -\int f\d\nu\Big|
    \leq \Big|\int f\one_{A_{s_m,p}\cap A_{\delta,\infty}}\d\nu_n
    -\int f\one_{A_{s_m,p}\cap A_{\delta,\infty}}\d\nu\Big|\\
    +C \nu_n(A_{s_m,p}\setminus A_{\delta,\infty})
    +C \nu(A_{s_m,p}\setminus A_{\delta,\infty}).
  \end{multline*}
  The first term converges to zero as
  $n\to\infty$, while the last two terms are at most $3C\eps$ for all
  sufficiently large $n$. Thus, the restriction of $\nu_n$ onto
  $A_{s_m,p}$ weakly converges to the restriction of $\nu$. By
  \cite[Theorem~3.9]{bmm25}, $\nu_n$ vaguely converges to $\nu$ on
  $\sS_p$.
\end{proof}

For $\mu=\sum_{m\geq 1}\delta_{x_m}\in\bN$, define an approximation to
$\mu$ by 
\begin{displaymath}
  J_k(\mu)=\sum_{m=1}^{k}\delta_{x_{(m)}},
\end{displaymath}
where $x_{(m)}$ is the $m$th largest order statistic of
$(|x_m|)_{m\geq 1}$ taken with the original sign. In particular,
$J_1(\mu)$ is the Dirac measure at the greatest (in absolute value)
point of $\mu$. In case of ties we chose the element with the lower
index.

\begin{lemma}
  \label{lemma:lp-vague}
  Assume that $\nu$ and $\nu_n$, $n\geq1$, are measures of $(\bN,\sN)$
  which are finite on $\sS_p$ for some $p\in[1,\infty)$. Then
  $\nu_n\vstop \nu$ for $p\in[1,\infty)$ if %and only if
  $\nu_n\vstop[\infty]\nu$,
  \begin{equation}
    \label{eq:6}
    \lim_{k\to\infty}\limsup_{n\to\infty}
    \nu_n\big(\{\mu:\|\mu-J_k(\mu)\|_p>\eps\}\big)=0
  \end{equation}
  for all $\eps>0$, 
  \begin{equation}
    \label{eq:2}
    \limsup_{n\to\infty}\nu_n\big(\{\mu:\|\mu\|_p>r\}\big)<\infty
  \end{equation}
  for all $r>0$ and the pushforward of $\nu$ by $J_k$ satisfies 
  \begin{equation}
    \label{eq:nu-r}
    J_k\nu \vstop \nu\quad \text{as}\; k\to\infty.
  \end{equation}
  Furthermore, $\nu_n\vstop\nu$ also implies
  that \eqref{eq:6}, \eqref{eq:2} and \eqref{eq:nu-r} hold. 
\end{lemma}
\begin{proof}
  % \textsl{Necessity.} The validity of \eqref{eq:2} and \eqref{eq:nu-r}
  % is obvious. Finally, \eqref{eq:6} follows from the fact that
  % $\nu_n\big(\{\mu:\|\mu-J_k(\mu)\|_p>\eps\}\big)$ converges to
  % $\nu\big(\{\mu:\|\mu-J_k(\mu)\|_p>\eps\}\big)$ for all but an at
  % most countable family of $\eps>0$. 
  % \noindent
  % \textsl{Sufficiency.}
  By \cite[Lemma~3.47]{bmm25}, to show that \eqref{eq:vsto}
  holds, it suffices to show convergence of integrals of bounded
  Lipschitz functions. Let $g:\bN\to\R$ be a 1-Lipschitz function
  (in the $\dm_p$ metric) whose absolute value is bounded by $C$ and such
  that $g(\mu)=0$ if $\|\mu\|_p\leq r$ for some $r>0$. 

  By the triangle inequality, for any $k\in\N$,
  \begin{align}
    \Big|\int g(\mu)\nu_n(\d\mu)-\int g(\mu)\nu(\d\mu)\Big|\leq
    &\Big|\int g(\mu)\nu_n(\d\mu)-\int g(\mu)(J_k\nu_n)(\d\mu)\Big|\notag\\
    &+\Big|\int g(\mu)(J_k\nu_n)(\d\mu)-\int g(\mu)(J_k\nu)(\d\mu)\Big|\\
    &+\Big|\int g(\mu)(J_k\nu)(\d\mu)-\int g(\mu)\nu(\d\mu)\Big|\notag.
  \end{align}
  The second summand converges to zero as $n\to\infty$, since
  $\nu_n\vstop[\infty]\nu$ implies $J_k\nu_n\vstop J_k\nu$, noticing
  that the involved counting measures have at most $k$ atoms and on
  the subspace of counting measures with at most k atoms, the
  $\dm_{\infty}$ and $\dm_p$-topologies coincide. The last
  summand converges to zero by \eqref{eq:nu-r}.

  The first summand is bounded by
  \begin{equation}
    \int
    \big|g(\mu)-g(J_k\mu)\big|\one_{\|\mu-J_k(\mu)\|_p\leq\eps}
    \nu_n(\d\mu)
    +\int \big|g(\mu)-g(J_k\mu)\big|\one_{\|\mu-J_k(\mu)\|_p>\eps}
    \nu_n(\d\mu)
  \end{equation}
  for any $\eps>0$. The Lipschitz property of $g$ together with the
  fact that $\dm_p(\mu,J_k(\mu))=\|\mu-J_k(\mu)\|_p$ imply that
  the first term is bounded by
  \begin{displaymath}
    \int_{\{\mu:\|\mu\|_p>r\}} 
    \big|g(\mu)-g(J_k\mu)\big|\one_{\|\mu-J_k(\mu)\|_p\leq\eps}
    \nu_n(\d\mu)
    \leq \eps\nu_n\big(\{\mu:\|\mu\|_p>r\}\big),
  \end{displaymath}
  and so can be made arbitrarily small by the choice of $\eps$ in view
  of \eqref{eq:2}.  The second term converges to zero, since
  \begin{align*}
    \int \big|g(\mu)-g(J_k\mu)\big|\one_{\|\mu-J_k(\mu)\|_p>\eps}
    \nu_n(\d\mu)
    \leq  2C\nu_n\big(\{\mu:\|\mu-J_k(\mu)\|_p>\eps\}\big)
  \end{align*}
  in view of \eqref{eq:6}.

  Assume that $\nu_n\vstop\nu$. The validity of \eqref{eq:2} and
  \eqref{eq:nu-r} is obvious. Finally, \eqref{eq:6} follows from the
  fact that $\nu_n\big(\{\mu:\|\mu-J_k(\mu)\|_p>\eps\}\big)$ converges
  to $\nu\big(\{\mu:\|\mu-J_k(\mu)\|_p>\eps\}\big)$ for all but an at
  most countable family of $\eps>0$.
\end{proof}

Note that \eqref{eq:nu-r} always holds if $\nu_n$ are supported on
counting measures with total masses bounded by a constant which does
not depend on $n$.

While $\sS_\infty\subset \sS_p$, the $\dm_p$-topology is stronger than
$\dm_\infty$, and so a function continuous in $\dm_p$ is
not necessarily continuous in $\dm_\infty$. Therefore, it is
not possible to show that the vague convergence on $\sS_p$ implies it
on $\sS_\infty$. Still, it is possible to deduce from the former the
convergence of the $\ell_\infty$ norms.

\begin{lemma}
  \label{lemma:infty-convergence}
  If $\nu_n\vstop\nu$, then 
  \begin{equation}
    \label{eq:mu-infty-1}
    \nu_n(\{\mu:\|\mu\|_\infty>s\}) \to
    \nu(\{\mu:\|\mu\|_\infty>s\}) \quad\text{as}\; n\to\infty
  \end{equation}
  for all $s$ such that $\nu(\{\mu:\|\mu\|_\infty=s\})=0$.
\end{lemma}
\begin{proof}
  The function $f(\mu)=\|\mu\|_\infty$ is continuous in
  $\dm_p$. Indeed, assume that $\dm_p(\mu_n,\mu)\to0$ as
  $n\to\infty$. Choosing $\eps>0$ which is distinct from all atoms of
  $\mu$ yields that the restriction of $\mu_n$ onto the complement of
  $[-\eps,\eps]$ converges in $\dm_p$ to the restriction of $\mu$ onto
  the same set. Since the atoms of these restrictions converge
  individually, the largest and the smallest atoms of $\mu_n$
  converge to such atoms of $\mu$, and so
  $\|\mu_n\|_\infty\to\|\mu\|_\infty$. Thus, \eqref{eq:mu-infty-1}
  holds. 
\end{proof}

\section*{Acknowledgement}

The authors are grateful to Andrii Ilienko for many helpful
discussions. 

%\bibliographystyle{abbrv}
%\bibliography{JBIM}

\begin{thebibliography}{10}

\bibitem{MR1190904}
A.~D. Barbour and T.~C. Brown.
\newblock Stein's method and point process approximation.
\newblock {\em Stochastic Process. Appl.}, 43(1):9--31, 1992.

\bibitem{bmm25}
B.~Basrak, N.~Milin\v{c}evi{\'c}, and I.~Molchanov.
\newblock Foundations of regular variation on topological spaces.
\newblock Technical report, ArXiv math, 2025.

\bibitem{bas:plan19}
B.~Basrak and H.~Planini\'{c}.
\newblock A note on vague convergence of measures.
\newblock {\em Statist. Probab. Lett.}, 153:180--186, 2019.

\bibitem{basrak18}
B.~Basrak, H.~Planini\'{c}, and P.~Soulier.
\newblock An invariance principle for sums and record times of regularly
  varying stationary sequences.
\newblock {\em Probab. Th. Rel. Fields}, 172(3-4):869--914, 2018.

\bibitem{MR1700749}
P.~Billingsley.
\newblock {\em Convergence of {P}robability {M}easures}.
\newblock John Wiley, New York, second edition, 1999.

\bibitem{bmd}
D.~Buraczewski, E.~Damek, and T.~Mikosch.
\newblock {\em Stochastic Models with Power-Law Tails. The equation $X=AX+B$}.
\newblock Springer, Cham, 2016.

\bibitem{MR1835418}
D.~Burago, Y.~Burago, and S.~Ivanov.
\newblock {\em A course in metric geometry}.
\newblock Amer. Math. Soc., Providence, RI, 2001.

\bibitem{MR4249569}
R.~Cheng, J.~Mashreghi, and W.~T. Ross.
\newblock {\em Function Theory and {${\ell}^p$} Spaces}.
\newblock Amer. Math. Soc., Providence, RI, 2020.

\bibitem{MR2271178}
Y.~Davydov and V.~Egorov.
\newblock On convergence of empirical point processes.
\newblock {\em Statist. Probab. Lett.}, 76(17):1836--1844, 2006.

\bibitem{MR2817615}
Y.~Davydov, I.~Molchanov, and S.~Zuyev.
\newblock Stability for random measures, point processes and discrete
  semigroups.
\newblock {\em Bernoulli}, 17(3):1015--1043, 2011.

\bibitem{dtw22}
C.~Dombry, C.~Tillier, and O.~Wintenberger.
\newblock Hidden regular variation for point processes and the single/multiple
  large point heuristic.
\newblock {\em Ann. Appl. Probab.}, 32(1):191--234, 2022.

\bibitem{MR1957004}
R.~J. Fleming and J.~E. Jamison.
\newblock {\em Isometries on {B}anach Spaces: Function Spaces}.
\newblock 2003.

\bibitem{MR1422917}
B.~Fristedt and L.~Gray.
\newblock {\em A Modern Approach to Probability Theory}.
\newblock Birkh\"auser, Boston, MA, 1997.

\bibitem{MR240854}
C.~C. Heyde.
\newblock On large deviation probabilities in the case of attraction to a
  non-normal stable law.
\newblock {\em Sankhy\=a{} Ser. A}, 30:253--258, 1968.

\bibitem{MR3266500}
J.~Hu and Z.~Bai.
\newblock Strong representation of weak convergence.
\newblock {\em Sci. China Math.}, 57(11):2399--2406, 2014.

\bibitem{MR4828862}
Z.~Kabluchko and A.~Marynych.
\newblock Random walks in the high-dimensional limit {I}: {T}he {W}iener
  spiral.
\newblock {\em Ann. Inst. Henri Poincar\'e{} Probab. Stat.}, 60(4):2945--2974,
  2024.

\bibitem{MR4774174}
Z.~Kabluchko, A.~Marynych, and K.~Raschel.
\newblock Random walks in the high-dimensional limit {II}: {T}he crinkled
  subordinator.
\newblock {\em Stochastic Process. Appl.}, 176:Paper No. 104428, 13, 2024.

\bibitem{MR105017}
J.~Lamperti.
\newblock On the isometries of certain function-spaces.
\newblock {\em Pacific J. Math.}, 8:459--466, 1958.

\bibitem{led:oes23}
J.~Lederer and M.~Oesting.
\newblock Extremes in high dimensions: methods and scalable algorithms.
\newblock Technical report, ArXiv math, 2023.

\bibitem{MR4230105}
J.~P. Nolan.
\newblock {\em Univariate Stable Distributions: Models for Heavy Tailed Data}.
\newblock Springer, Cham, 2020.

\bibitem{res87}
S.~I. Resnick.
\newblock {\em Extreme Values, Regular Variation, and Point Processes}.
\newblock Springer-Verlag, New York, 1987.

\bibitem{MR368308}
Y.~Rinott.
\newblock Multivariate majorization and rearrangement inequalities with some
  applications to probability and statistics.
\newblock {\em Israel J. Math.}, 15(1):60--77, 1973.

\bibitem{MR2454027}
D.~Schuhmacher and A.~Xia.
\newblock A new metric between distributions of point processes.
\newblock {\em Adv. in Appl. Probab.}, 40(3):651--672, 2008.

\bibitem{MR1745764}
V.~V. Uchaikin and V.~M. Zolotarev.
\newblock {\em Chance and Stability}.
\newblock VSP, Utrecht, 1999.

\bibitem{MR1048445}
A.~Vince.
\newblock A rearrangement inequality and the permutahedron.
\newblock {\em Amer. Math. Monthly}, 97(4):319--323, 1990.

\bibitem{MR2790117}
I.~Weissman.
\newblock On some dependence measures for multivariate extreme value
  distributions.
\newblock In {\em Advances in mathematical and statistical modeling}, Stat.
  Ind. Technol., pages 171--180. Birkh\"auser, Boston, MA, 2008.

\end{thebibliography}

\end{document}